\theoremstyle{plain}
\newtheorem{thm}{Theorem}[section]
\newtheorem{theorem}[thm]{Theorem}
\newtheorem{lemma}[thm]{Lemma}
\newtheorem{proposition}[thm]{Proposition}
\theoremstyle{definition}
\newtheorem{defin}[thm]{Definition}
\newtheorem{example}[thm]{Example}
\newtheorem{question}[thm]{Question}
\numberwithin{equation}{section}
\newcommand{\sB}{{\mathcal B}}
\newcommand{\sR}{{\mathcal R}}
\newcommand{\PP}{\ensuremath{\mathbb{P}}}
\newcommand\LL{{\mathbb L}}
\newcommand{\AAA}{\ensuremath{\mathbb{A}}}
\newcommand{\CC}{\ensuremath{\mathbb{C}}}
\newcommand{\ZZ}{\ensuremath{\mathbb{Z}}}
\newcommand{\hol}{\ensuremath{\mathcal{O}}}
\newcommand\la{\lambda}
\newcommand\be{\beta}
\newcommand\Ga{\Gamma}
\newcommand{\ra}{\ensuremath{\rightarrow}}
\def\eea{\end{eqnarray*}}
\def\bea{\begin{eqnarray*}}
\newcommand\dual{\mathrel{\raise3pt\hbox{$\underline{\mathrm{\thinspace d
\thinspace}}$}}}
\newcommand\qe{\ifhmode\unskip\nobreak\fi\quad $\Box$}       
\def\BOX{\hfill\lower.5\baselineskip\hbox{$\Box$}}
\newtheorem{theo}{Theorem}[section]
\newtheorem{remarkk}[theo]{Remark}
\newenvironment{rem}{\begin{remarkk}\rm}{\end{remarkk}}
\newcommand{\Proof}{{\it Proof. }}
\title [Ueno-type is unirational]{On the unirationality of higher dimensional Ueno-type manifolds}
\author{Fabrizio Catanese, Keiji Oguiso, Alessandro Verra}
\address{}
\email{}
\address{} \email{}
\subjclass[2015]{14 E 07 14E08, 14J32, 14J50, 37F99}
\begin{document}

\thanks{The first author acknowledges support of the ERC 2013 Advanced Research Grant - 340258 - TADMICAMT. The first and second author
acknowledge KIAS for support during their visits in the quality of KIAS scholars. }

\maketitle

{\em Dedicated to Lucian Badescu with friendship and admiration on
the occasion of his 70-th birthday.}

\begin{abstract}
We prove the unirationality of the Ueno-type manifold $X_{4,6}$. $X_{4,6}$ is the minimal resolution of the quotient
of the Cartesian product $E(6)^4$, where $E(6)$ is the equianharmonic elliptic curve, by the diagonal action  of  a cyclic 
group of order 6 (having a fixed point on each copy of $E(6)$). We collect also other results, and discuss
several related open questions.
\end{abstract}

\section{Introduction}

Let $k$ be any field of characteristic $\not= 3$ containing a primitive third root of unity $\omega$, respectively a field of characteristic $\not= 2$ 
containing a primitive fourth root of unity $i$.

We shall work over $k$ unless otherwise stated. 

One of the standard normal forms for the function fields of elliptic curves is the following  normal form, defining an affine plane curve with equation
$$E_{\la} = \{ (x,y) | y^2 = x (x-1) (x-\la)\}  .$$

The curve $E_{\la} $ is said to be {\bf harmonic}  if the cross ratio $\la \in \{ -1,2,1/2\}$, and {\bf equianharmonic} if $\la $ 
is a primitive 6-th root of $1$, $\la \in \{ \eta, \eta^{-1}\}$, $\eta : = - \omega$. 

These curves admit automorphisms of respective orders $4,6$, having a fixed point.

In the harmonic case if we take   the above normal form with $\la = -1$, the automorphism of order $4$ is given by 

$$g_4:  (x , y)  \mapsto (-x,  i y).   $$ 

In the equianharmonic case it is easier to see the automorphism $g_6$ of order $6$ by changing the normal form to
$$E_{\eta} \cong  \{ (x,y) | y^2 = x^3 - 1\},  $$
so that
$$g_6:  (x , y)  \mapsto (\omega x,  - y),   $$
while using the Fermat normal form things are more complicated, 
$$E_{\eta} \cong  \{ (x,y) | y^3 = x^3 - 1\},  $$
and
$$g_6:  (x , y)  \mapsto ( 1/x, \eta y/x).   $$

Using instead the normal form where $E_{\eta}$ is birational to the singular plane curve
$$E'_{\eta} =  \{ (x,y) | y^6 = x^2 (x - 1)\},  $$
the automorphism takes the easier form
$$g_6:  (x , y)  \mapsto ( x,  \eta  y),   $$
and one sees immediately that the field of $g_6$-invariant rational functions is
the field $k(x)$.

Let  now 
$(x : y : z)$ be  homogeneous coordinates on ${\PP}^2$,  let
$$E(6) := \{ (x : y : z) | y^2z = x^3 -z^3 \} \subseteq {\PP}^2,$$
 be the projective model of the elliptic curve $E_{\eta}$, on which the automorphism $g_6$ acts by
$$g_6 (x : y : z) = (\omega x : -y :z)\\ $$
and similarly let 
$$E(4) :=  \{ (x : y : z) | y^2z = x (x^2 -z^2)  \} \subseteq {\PP}^2,$$
 be the projective model of the elliptic curve $E_{-1}$, on which the automorphism $g_4$ acts by
$$g_4 (x : y : z) = ( - x : i y :z)  .$$

When $k$ is the complex number field ${\CC}$, we have 
$$(E(6), g_6) \simeq 
(T_{\omega}, -\omega)\,\, ,$$ 
where $T_{\omega} = {\CC}/ ({\ZZ} + \omega{\ZZ})$ is the elliptic curve with period $\omega$, and $-\omega$ is the automorphism induced by  multiplication by $-\omega$ on ${\CC}$. 
We have such an isomorphism  because $g_6$ acts on the  holomorphic $1$-form $ dx /y$ via multiplication by   $-\omega$. 

Similarly $$(E(4), g_4) \simeq 
(T_{i},  i)\,\, .$$ 

Let now $(C, g)$ be either $(E(6), g_6)$ or $(E(4), g_4)$, and let $g$ act diagonally on the Cartesian product  $C^n.$ We set 
$$Z_n (4) :  = E(4)^n, \  Z_n (6) :  = E(6)^n.$$ 

\begin{defin}
We define the {\bf Ueno-type  manifold} $X_{n,6}$ of dimension $n$ to be the minimal resolution of singularities of the normal variety  $Y_{n,6} : = Z_n (6) / g_6$,
while we reserve the name of  {\bf Ueno-Campana manifold} for the Ueno-type manifold $X_{n,4}$ of dimension $n$, which is
 the minimal resolution of singularities of the normal variety  $Y_{n,4} : = Z_n (4) / g_4$.
\end{defin}

Observe that the quotient $n$-fold 
$$Y_{n,6} :=  Z_n (6) / g_6, \ n \geq 2, $$
has finitely many singular points of type 
$(1,1,1, \cdots, 1)/6$, of type $(1,1,1, \cdots, 1)/3$ and of type $(1,1,1, \cdots, 1)/2$, which are all  $k$-rational. $X_{n,6}$ is the blow up of $Y_{n,6}$
 at the maximal ideals of these singular points:  it is a smooth projective $n$-fold defined over $k$. 

It is classical that these manifolds are rational for $ n\leq 2$, and the arguments of Ueno (\cite{Ue75}) show that: 
\begin{itemize}
\item the Kodaira dimension of $X_{n,6}$ is $0$ if $n \ge 6$ and $-\infty$ if $n \le 5$,
\item
  the Kodaira dimension of $X_{n,4}$ is $0$ if $n \ge 4$ and $-\infty$ if $n \le 3$.
  \end{itemize}

   Much later Koll\'ar and Larsen (\cite{Kollar2}) showed a more general result:  if $Z$ has trivial canonical bundle and a finite group $G$ acts on $Z$, either the quotient
  $Z/G$ has Kodaira dimension 0, or it is uniruled.

 Ueno asked about separable unirationality of the manifold   $X_{3,4}$, and Oguiso asked the similar question  for $X_{n,6}$, $3 \leq n \leq 5$ .

 Interest for these open questions was revived  by Campana, who showed (\cite{Ca12}) that $X_{3,4}$ is rationally connected and asked about rationality of $X_{3,4}$; unirationality was proven 
by Catanese, Oguiso and Truong in \cite{COT13}, and later Colliot-Th\'el\`ene proved rationality in \cite{CTh13} using the conic bundle description of 
\cite{COT13}.

In the case of the Ueno manifolds, Oguiso and Truong proved 
in  \cite{OT13} that  $X_{3,6}$ is rational. 

The main result of the present paper is the following.

\begin{theorem}\label{thm1}
$X_{4,6}$ is unirational.
\end{theorem}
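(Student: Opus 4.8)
The plan is to reduce to an explicit description of the function field and then to peel off one elliptic factor, leveraging the rationality of $X_{3,6}$. First I would pass to the model $E(6)\cong\{y^6 = x^2(x-1)\}$, on which $g_6$ acts by $y\mapsto\eta y$, so that $x$ is the quotient coordinate for $E(6)\to E(6)/g_6 = \PP^1$. Writing $f(x) = x^2(x-1)$ and taking the four factors with coordinates $(x_i,y_i)$, the invariants of the diagonal action are generated over $k(x_1,\dots,x_4)$ by the ratios $t_j = y_{j+1}/y_1$, subject to $t_j^6 = f(x_{j+1})/f(x_1)$ for $j=1,2,3$. Thus $Y_{4,6}$ acquires an explicit birational model as a $(\Z/6)^3$-cover of $(\PP^1)^4$.

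Next I would exploit the projection $Y_{4,6}\to Y_{3,6}$ forgetting the fourth factor. It is an isotrivial genus-one fibration over the threefold $Y_{3,6}$, which is rational by Oguiso--Truong (\cite{OT13}). Since $j(E(6)) = 0$ and $g_6$ is a sixth root of unity in $\Aut(E(6))$, the generic fibre is a sextic twist of $E(6)$, of Weierstrass form $y^2 = x^3 - D$ with $D\equiv f(x_1)$ modulo sixth powers and $D$ pulled back from $\PP^1_{x_1}$. The diagonal map $(P_1,P_2,P_3)\mapsto(P_1,P_2,P_3,P_1)$ is a $g_6$-equivariant section of the projection, so the generic fibre carries a canonical rational point; concretely the associated surface $S\colon y^2 = x^3 - x_1^2(x_1-1)$ over $\PP^1_{x_1}$ has the section $(x,y) = (x_1,x_1)$. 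Because $\deg f = 3\le 6$, $S$ is a rational elliptic surface, and the section together with the pencil of lines through it realises this rationality explicitly: the residual intersection is governed by a quadratic in $x$ whose discriminant conic $\mu^2 = \lambda^4 - 6x_1\lambda^2 + 8x_1\lambda - 3x_1^2$ carries the rational point $(x_1,\mu) = (0,\lambda^2)$.

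Equivalently, the fibrewise involution $y\mapsto -y$ exhibits $Y_{4,6}$ as a double cover, branched along $\{x^3 = f(x_1)\}$, of the (rational) $\PP^1$-bundle $W$ over $Y_{3,6}$, and the task becomes to parametrise this double cover. I would attempt to do so by transporting the line-and-conic parametrisation of $S$ fibrewise over $Y_{3,6}$, so that a choice of slope $\lambda$ together with a point of the discriminant conic reconstructs a point of the twisted fibre rationally.

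The main obstacle is that this fibrewise parametrisation cannot be taken over $\PP^1_{x_1}$ naively: the fibres of $Y_{3,6}\to\PP^1_{x_1}$ are the abelian surfaces $E(6)^2$, so $Y_{3,6}$ is \emph{not} birational to $\PP^1_{x_1}\times(\text{surface})$ and the free base directions transverse to $x_1$ do not exist. Hence the rationality of $S$ and of $Y_{3,6}$ does not formally yield unirationality of the fibre product $Y_{4,6}\sim Y_{3,6}\times_{\PP^1_{x_1}}S$. The heart of the proof must therefore be to choose an \emph{explicit} rational parametrisation of $Y_{3,6}$ compatible with the map $x_1$ --- so that $f(x_1)$ takes a shape in which the sextic-twist equation $y^2 = x^3 - f(x_1)$ can be solved rationally for one coordinate of $Y_{3,6}$ --- thereby exhibiting a dominant map from a rational fourfold onto $Y_{4,6}$. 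This reliance on rationality of the base (available when $n=4$, but only unirationality being known for the base $Y_{4,6}$ when $n=5$) is what I expect to make the argument delicate and special to dimension four.
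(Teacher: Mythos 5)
Your opening step reproduces the paper's first lemma of Section 2: the invariant field is $k(x_1,\dots,x_4,t_2,t_3,t_4)$ with relations $t_i^6=f(x_i)/f(x_1)$, and your fibre-product description is also correct --- setting $X'=x_4/t_4^2$, $Y'=x_4/t_4^3$ one checks that $k(Y_{4,6})=k(Y_{3,6})(X',Y')$ with the single relation $Y'^2=X'^3-f(x_1)$, so $Y_{4,6}$ is indeed birational to $Y_{3,6}\times_{\PP^1_{x_1}}S$, where $S:Y^2=X^3-x_1^2(x_1-1)$ is a rational elliptic surface and the diagonal gives the section $(x_1,x_1)$. The problem is that this is where your proof ends. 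You yourself concede that this structure ``does not formally yield unirationality'' and that ``the heart of the proof'' is still to be found; no dominant rational map from a rational variety onto $Y_{4,6}$ is ever produced, nor any mechanism that could produce one. This is a genuine gap, not a deferrable verification, for two structural reasons. First, fibre products of rational varieties over $\PP^1$ can fail even to be uniruled: Schoen's Calabi--Yau threefolds are fibre products over $\PP^1$ of two rational elliptic surfaces with section, so nothing in the data you exhibit (rational base, rational $S$, a section) forces unirationality. Second, your fibration has one-dimensional fibres of genus one; such fibres are never unirational over the function field of the base, so no fibrewise argument in the style of conic bundles or cubic-surface bundles can close the gap, and the known rationality of $Y_{3,6}$ cannot be fed into the construction precisely because, as you note, it is transverse to the fibration $Y_{3,6}\to\PP^1_{x_1}$, whose fibres are abelian surfaces.

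The paper escapes this dead end by abandoning the elliptic fibration and treating the indices $2,3,4$ symmetrically: after the substitutions of Lemmas \ref{lem2}--\ref{lem5} (in essence, rationalising your twist equation simultaneously in all three factors and then eliminating $x_1$), the same function field becomes $k(s_3,s_4,w_2,w_3,w_4)$ with the single relation
$$(s_3-s_4)s_3s_4(w_2^3-1)-(s_3-1)s_3(w_4^3-1)+(s_4-1)s_4(w_3^3-1)=0\,\, ,$$
i.e.\ $X_{4,6}$ is birational to a fibration in \emph{diagonal cubic surfaces} over $\AAA^2_{(s_3,s_4)}$. Now the fibres are two-dimensional, the generic fibre is a smooth cubic surface over $K=k(s_3,s_4)$ with evident $K$-rational points ($w_i$ equal to cube roots of $1$), and Segre's theorem asserts that a smooth cubic surface with a $K$-rational point is $K$-unirational; since $K$ is purely transcendental over $k$, unirationality of $X_{4,6}$ over $k$ follows. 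The lesson, should you wish to salvage your approach, is that one must refibre so that the fibres themselves can be unirational over the function field of the base --- which genus-one fibres never are.
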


Our description can be useful to attack the further questions:

\begin{question}\label{quest1}
 Is  $X_{4,6}$  rational?
 
\end{question}

\begin{question}\label{quest1}
Is  $X_{5,6}$  unirational? Is  $X_{5,6}$  rational?
\end{question}

The rebirth of interest in the rationality of these manifolds stems also from complex dynamics and entropy, since these manifolds admit an action by $GL(n,\mathbb{Z})$
(and indeed by $GL(n,R_m)$, where $R_m$ is the cyclotomic ring $\mathbb{Z}[i]$, resp.  $\mathbb{Z}[\omega]$).

 In fact, $GL(n,\mathbb{Z})$ and $GL(n,R_m)$ act on the product $E(m)^n$;  and, since we divide by  a central automorphism, the action first of all descends to the quotient,
 and then it extends biregularly to 
 $X_{n,m}$ since  the resolution is just obtained by 
 blowing  up  the singular points of the quotient.

In the case of the Ueno manifolds, Oguiso and Truong proved 
in  \cite{OT13} that  $X_{3,6}$ is rational. 
They  not only proved the rationality of $X_{3,6}$, but also  showed that in this way one gets a rational variety with a primitive automorphism 
of positive entropy. Here, according to a concept introduced by De-Qi-Zhang (see \cite{Zhang}), an automorphism $f : X \rightarrow X$  is said to be birationally inprimitive
if there is a nontrivial rational fibration $\pi : X \rightarrow Y$, and a birational  automorphism $\phi$  of $Y$ such that $ \pi \circ f = \phi \circ \pi$.
De-Qi-Zhang showed that if a threefold $X$ admits a primitive birational automorphism of positive entropy, then either $X$ is a torus, or it is a $\mathbb{Q}$-Calabi-Yau manifold,
or $X$ is rationally connected.

 \begin{question}\label{quest2}
Does a similar result hold for $X_{4,6}$?
\end{question}
 \bigskip
 
 In another vein, our specific unirationality result lends itself to  more general questions. To formulate these, we need to briefly
 describe the steps of the proof, and the analogy with the case of $X_{3,4}$.

The first step of the proof  is computational, and consists in finding a minimal system of generators for the field of invariant
rational functions on $E^n_m$: here  the cases of $X_{3,4}$ and $X_{4,6}$ are treated quite similarly. 
For instance, in the case of $X_{3,6}$ one finds three generators,
hence these three elements are algebraically independent and the variety is $k$-rational.

In the case of $X_{3,4}$ we found  4 generators $t_1, t_2, u_1, u_2$ and one equation, which can be written as a diagonal quadratic form 
of the form 
$$  u_1^2 -  A(t_1, t_2)  u_2^2 - B(t_1, t_2)  = 0.$$

We thus got, birationally, a conic bundle over the projective plane, and the method of \cite{COT13}  consisted in showing that
the conic bundle has a bisection $Z$ which is rational: then the pull back of the conic bundle to $Z$
is a conic bundle with a section hence it is rational.

Colliot-Th\'el\`ene proved that the conic bundle does not have a section: in fact, if $K$ is the function field of the plane,
$A,B \in K$ and to such a diagonal conic over $K$ one associates a central algebra over $K$,
$M_{A,B} $, generated by $1,i,j, ij= - ji$ and defined by $ i^2= A, j^2 = B$.

By a general theorem the algebra is a division algebra if and only if the conic does not have any $K$-rational point
(in the contrary case $M_{A,B} \cong M(2,2,K)$). 
Moreover, two such conics are $K$-isomorphic if and only if the corresponding algebras are isomorphic
(they yield the same element of the Brauer group).

Colliot-Th\'el\`ene proved also that in this case the conic is isomorphic to one of the form 
$$  u_1^2 +  t_1  u_2^2 +  t_2  = 	1,$$
hence the function field is generated by $t_1, u_1,  u_2$ and $X_{3,4}$ is rational.

To prove here  the unirationality of $X_{4,6}$ , swe how that it is birational to a diagonal cubic surface $S$ over the function field
$K := k(t_1, t_2)$ 

$$ A(t_1, t_2)  (u_1^3-1) +  B(t_1, t_2)  (u_2^3-1)  + C(t_1, t_2) (u_3^3-1)  = 0.$$

The surface $S$ admits  therefore 27 rational points ( just let $u_j$ be a cubic root of $1$).

Then, by a theorem of B. Segre, it follows that $S$ is unirational;  we further observe here  that the degree of unirationality   is at most 6,
and we conjecture it to be at most 2.

Using other classical  results of B. Segre, Swinnerton-Dyer and Colliot-Th\'el\`ene on cubic surfaces and on diagonal cubic surfaces we show finally 
that  the surface $S$  is $K$- unirational, but it is not  $K$- rational.

We ask whether  it is possible, like it was done for the conic bundle case, to change the cubic surface birationally and 
obtain equations which imply the rationality of  $X_{4,6}$.

Observe that the coefficients $A(t_1, t_2), B(t_1, t_2)  ,C(t_1, t_2)  $  correspond to a very special system of plane cubics,
yielding the Del Pezzo surface of degree 2 which is the double cover of 
$\mathbb{P}^2$  branched over a complete quadrilateral.
 
 In particular, in  the course of our proof, we show that our variety $X_{4,6}$ is birational to a hypersurface $X$ of bidegree $(3,3)$
 inside $\PP^2 \times \PP^3$. 
 
 Viewing $X$ as a cubic surface $S$  over the function field $K:= k(\PP^2) = k(t_1, t_2)$ it follows, by  the cited theorem of Beniamino Segre,
 that $X$ is $K$-unirational if the surface $S$ admits a $K$-rational point, and that there are many such surfaces
 which are $K$-rational (note that then $X$ is  $k$-rational).
 
 \begin{question}\label{quest3}
Let $X$ be a very general hypersurface of bidegree $(3,3)$
 inside $\PP^2 \times \PP^3$. 
 
Is $X$ unirational? Is  $X$  rational?
\end{question}

\section{Proof of Theorem (\ref{thm1})}

Here $n = 4$ and we set $Z : = Z_4 (6)$ and $X : = X_{4,6}$, $g := g_6$. 

We write  $Z : = Z_4 (6) = C_1 \times C_2   \times C_3 \times C_4$, and let 
 $g : = g_6$ be the diagonal action $ g(1)  \times g(2)    \times g(3) \times g(4) $, where
the curves $(C_i, g(i)$ ($i =1,2,3, 4$) are birationally equivalent to $(E'_{\eta}, g_6)$.

 Hence we view  $C_i$ as birational to the singular curve $C_i^0$ in the affine space ${\AAA}^2 = {\rm Spec}\, k[X_i, Y_i]$, and $g(i)$
  as the automorphism of $C_i^0$  defined by
$$C_i^0 : =\{ (X_i, Y_i) | Y_i^6 = X_i^2(X_i -1)\} \,\, ,\,\, g_i^*Y_i = -\omega Y_i\,\, ,\,\, g_i^*X_i = X_i\,\, .$$

The affine coordinate ring $k[C_i^0]$ of $C_i^0$ is 
$$k[C_i^0] = k[X_i, Y_i]/(Y_i^6 -X_i^2(X_i -1))\,\, .$$ 
We set  $x_i : = X_i\, {\rm mod}\, (Y_i^6 -X_i^2(X_i -1)), y_i : = Y_i\, {\rm mod}\, (Y_i^6 -X_i^2(X_i -1))$. Then $y_i^j$ ($0 \le j \le 5$) form a free $k[x_i]$-basis of $k[C_i^0]$ over a polynomial ring $k[x_i]$ and therefore $x_i^my_i^j$ ($0 \le j \le 5$, $0 \le m$) form a free $k$-basis of $k[C_i^0]$. 

Then $(Z, g)$ is birationally equivalent to the affine fourfold 
$$V  := C_1^0 \times C_2^0 \times \times C_3^0 \times C_4^0$$
with automorphism $g = (g(1), g(2), g(3), g(4))$, and with affine coordinate ring
$$k[V] = k[C_i^0] \otimes k[C_2^0] \otimes k[C_3^0] \otimes k[C_4^0]\,\, .$$
The subring $k[x_1, x_2, x_3, x_4]$ of $k[V]$ is a polynomial ring with four free variables $x_1$, $x_2$, $x_3$, $x_4$ and $k[V]$ is a free $k[x_1, x_2, x_3, x_4]$-module with free basis  
$$y_1^{m_1}y_2^{m_2}y_3^{m_3}y_4^{m_4}\,\, {\rm ,\,\, where\,\,}\,\, 0 \le m_i \le 5\,\, .$$
 
The rational function field $k(Z)$ of $Z$ is
$$k(Z) = k(V) = k(x_1, x_2, x_3, x_4, y_1, y_2, y_3, y_4)\,\, .$$
In both $k[V]$ and $k(Z) =k(V)$, we have 
\begin{equation}\label{I} \,\, y_i^6 = x_i^2(x_i -1)\,\, ,\end{equation} 
\begin{equation}\label{II} \,\, g^*y_i = -\omega y_i\,\, ,\,\, g^*x_i = x_i\,\, .\end{equation} 
Here and hereafter each equation shall be viewed as an  equation in $k(V)$. 

The affine coordinate ring $V/\langle g \rangle$ is $k[V]^{g^*}$, the invariant subring of $k[V]$. Thus by (\ref{II}),  
\begin{equation}\label{III}\,\, k[V]^{g^*} = k[x_1, x_2, x_3, x_4][y_1^{m_1}y_2^{m_2}y_3^{m_3}y_4^{m_4} | \sum_{i=1}^{4} m_i \equiv 0 \, (\rm{mod}\, 6)]\,\, .
\end{equation} 
If $\sum_{i=1}^{4} m_i = 6k$, then
$$y_1^{m_1}y_2^{m_2}y_3^{m_3}y_4^{m_4} = (y_1^6)^k (\frac{y_2}{y_1})^{m_2}(\frac{y_3}{y_1})^{m_3}(\frac{y_4}{y_1})^{m_4} = (x_1^2(x_1-1))^k(\frac{y_2}{y_1})^{m_2}(\frac{y_3}{y_1})^{m_3}(\frac{y_4}{y_1})^{m_4}\,\, .$$
Note that $k(X) = Q(k[V]^{g^*})$, the  field of fractions of $k[V]^{g^*}$. 

Hence  
\begin{lemma}\label{lem1}
$$k(X) = k(x_1, x_2, x_3, x_4, t_2 := \frac{y_2}{y_1}, t_3 := \frac{y_3}{y_1}, 
t_4 := \frac{y_4}{y_1})$$
with relations precisely 
$$t_i^6 = \frac{x_i^2(x_i-1)}{x_1^2(x_1 - 1)}\,\, {\rm ,\,\, where\,\,}\,\, i = 2, 3, 4\,\, .$$
\end{lemma}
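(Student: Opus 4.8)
The plan is to compute the invariant subring $k[V]^{g^*}$ explicitly using the free $k[x_1,x_2,x_3,x_4]$-basis of $k[V]$ given by the monomials $y_1^{m_1}y_2^{m_2}y_3^{m_3}y_4^{m_4}$ with $0\le m_i\le 5$, and then pass to the field of fractions. Since $g^*$ acts on $y_i$ by the common scalar $-\omega$ (a primitive sixth root of unity), it acts on such a monomial by $(-\omega)^{\sum m_i}$. Because $-\omega$ has exact order $6$, a monomial is fixed precisely when $\sum_{i=1}^4 m_i\equiv 0\pmod 6$, which is exactly formula~(\ref{III}). This reduces the problem to understanding the monoid of exponent vectors whose coordinate sum is divisible by $6$.

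First I would make the key observation, already recorded in the excerpt, that whenever $\sum m_i = 6k$ one can rewrite
\[
y_1^{m_1}y_2^{m_2}y_3^{m_3}y_4^{m_4}=\bigl(x_1^2(x_1-1)\bigr)^{k}\,
\Bigl(\tfrac{y_2}{y_1}\Bigr)^{m_2}\Bigl(\tfrac{y_3}{y_1}\Bigr)^{m_3}\Bigl(\tfrac{y_4}{y_1}\Bigr)^{m_4},
\]
using the relation~(\ref{I}) $y_1^6=x_1^2(x_1-1)$ to absorb all powers of $y_1$ into the coefficient ring. Setting $t_i:=y_i/y_1$ for $i=2,3,4$, this exhibits every invariant monomial, hence every element of $k[V]^{g^*}$ after clearing denominators, as a polynomial in the $x_i$ and the $t_i$. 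Passing to fields of fractions and recalling that $k(X)=Q(k[V]^{g^*})$, I conclude that $k(X)$ is generated over $k(x_1,x_2,x_3,x_4)$ by $t_2,t_3,t_4$; the containment $k(x_1,\dots,x_4,t_2,t_3,t_4)\subseteq k(X)$ is clear since each $t_i$ is itself invariant ($g^*t_i=(-\omega)y_i/((-\omega)y_1)=t_i$), and the reverse containment follows from the rewriting above.

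Next I would verify the stated relations. Raising $t_i=y_i/y_1$ to the sixth power and using~(\ref{I}) twice gives
\[
t_i^6=\frac{y_i^6}{y_1^6}=\frac{x_i^2(x_i-1)}{x_1^2(x_1-1)},
\qquad i=2,3,4,
\]
so these relations certainly hold. The phrase I must justify is that they hold \emph{precisely}: one needs that no further algebraic relations among the generators survive. This is where I expect the main subtlety. The cleanest argument is a transcendence/degree count: the four elements $x_1,x_2,x_3,x_4$ are algebraically independent (they generate a polynomial subring), so $k(x_1,\dots,x_4)$ has transcendence degree $4$ over $k$, matching $\dim X=4$, whence $k(X)$ is algebraic over $k(x_1,\dots,x_4)$ and each $t_i$ is algebraic over it. Since $k(X)=k(Z)^{g}$ and $[k(Z):k(X)]=|\langle g\rangle|=6$, the extension $k(X)/k(x_1,\dots,x_4)$ is a subextension of $k(x_1,\dots,x_4,y_1,\dots,y_4)/k(x_1,\dots,x_4)$, which has degree $6^4$; tracking how adjoining each $t_i$ imposes the single irreducible sextic relation above, one checks that these three relations already cut out a field of the correct degree over $k(x_1,\dots,x_4)$, so there is no room for additional relations.

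The main obstacle is precisely this minimality/irreducibility claim: one must confirm that each polynomial $T_i^6-\frac{x_i^2(x_i-1)}{x_1^2(x_1-1)}$ is irreducible over the field obtained so far, equivalently that $\frac{x_i^2(x_i-1)}{x_1^2(x_1-1)}$ is not a $p$-th power for $p\in\{2,3\}$ in the relevant field, so that the three sextic relations are independent and complete. Granting this, the generators $x_1,\dots,x_4,t_2,t_3,t_4$ together with exactly the three displayed relations present $k(X)$, which is the assertion of the lemma. I would handle the irreducibility by a direct valuation argument at the prime divisors $x_i=0$, $x_i=1$, $x_1=0$, $x_1=1$ of $\mathbb{A}^4$, where the rational function $\frac{x_i^2(x_i-1)}{x_1^2(x_1-1)}$ has order not divisible by $2$ or $3$, ruling out its being a nontrivial power and hence establishing full irreducibility of each sextic.
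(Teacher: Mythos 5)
Your generation argument and the verification that the relations hold are exactly the paper's (invariant monomials with $\sum_i m_i \equiv 0 \pmod 6$, the rewriting via $y_1^6=x_1^2(x_1-1)$, passing to fraction fields), and your degree count $[k(X):k(x_1,x_2,x_3,x_4)]=6^4/6=6^3$ is also the paper's. You part ways on the key step, the word ``precisely''. The paper runs the count in the opposite direction from you: since $k(X)=k(x_1,\dots,x_4)(t_2,t_3,t_4)$ and each $t_i$ satisfies a degree-$6$ equation, the tower $k(x_1,\dots,x_4)\subseteq k(x_1,\dots,x_4)(t_2)\subseteq k(x_1,\dots,x_4)(t_2,t_3)\subseteq k(X)$ has total degree at most $6\cdot 6\cdot 6=6^3$; equality with $6^3$ then \emph{forces} every step to have degree exactly $6$, so each sextic is automatically the minimal polynomial of $t_i$ over the preceding field, and the relations generate all relations. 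Irreducibility is thus a consequence of the count, not a prerequisite, and no separate argument is needed. Your plan instead treats irreducibility as an independent obstacle, to be settled by valuations, and only then matches degrees; this is logically sound but does superfluous work, and your sketch of it has two concrete defects. First, the list of divisors is partly wrong: at $x_i=0$ and at $x_1=0$ the function $x_i^2(x_i-1)/x_1^2(x_1-1)$ has order $\pm 2$, which is divisible by $2$; only the divisors $x_i=1$ and $x_1=1$, where the order is $\pm 1$, exclude both squares and cubes. Second, irreducibility is needed not just over $k(x_1,\dots,x_4)$ but over the intermediate fields $k(x_1,\dots,x_4)(t_2)$ and $k(x_1,\dots,x_4)(t_2,t_3)$; for the valuation argument to persist there you must also observe that the divisor $x_i=1$ is unramified in these extensions (their ramification loci only involve $x_j$ and $x_j-1$ for $j<i$), so that the order of the function at primes above $x_i=1$ remains $1$. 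Both defects are repairable, but the paper's sandwich argument renders the entire valuation discussion unnecessary.
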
 
\begin{proof} We only need to observe that the three relations above are all the relations. Since $[k(V) : k(x_1, x_2, x_3, x_4)] = 6^4$ and $[k(V) : k(X)] = 6$, it follows that 
$$[k(X): k(x_1, x_2, x_3, x_4)] = 6^3\,\, .$$ Thus, the equation above for $i=2$ is the minimal equation of $t_2$ over $k(x_1, x_2, x_3, x_4)$, the equation above for $i=3$ is the minimal equation of $t_3$ over $k(x_1, x_2, x_3, x_4)(t_2)$ and the equation above for $i=4$ is the minimal equation of $t_4$ over $k(x_1, x_2, x_3, x_4)(t_2, t_3)$, as desired.
 
\end{proof} 

Define:
\begin{equation}\label{IV} \,\, u_2 := \frac{x_1}{x_2}t_2^3\,\, ,\,\, u_3 := \frac{x_1}{x_3}t_3^3\,\, ,\,\, u_4 := \,\, \frac{x_1}{x_4}t_4^3\,\, .
\end{equation}Then 
\begin{lemma}\label{lem2}
$$k(X) = k(x_1, u_2, u_3, u_4, t_2, t_3, 
t_4)$$
with relations precisely 
$$\frac{u_i}{t_i^3} = \frac{x_1}{u_i^2(x_1-1)+1}\,\,{\rm ,\,\, where}\,\, i = 2,3, 4\,\, .$$
\end{lemma}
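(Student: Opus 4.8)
The plan is to treat the passage from the coordinates $(x_2, x_3, x_4)$ to the coordinates $(u_2, u_3, u_4)$ as a birational change of variables, and then to redo, in the new coordinates, the degree count already used in Lemma~\ref{lem1}. First I would extract the crucial algebraic consequence of the defining relations. Squaring the definitions (\ref{IV}) and substituting the relation $t_i^6 = x_i^2(x_i-1)/(x_1^2(x_1-1))$ of Lemma~\ref{lem1} gives
$$u_i^2 = \frac{x_1^2}{x_i^2}\, t_i^6 = \frac{x_i - 1}{x_1 - 1}\,\, ,$$
so that $x_i = u_i^2(x_1-1) + 1$ for $i = 2,3,4$. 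This is the key identity: it expresses each old variable $x_i$ as a rational function of $x_1$ and $u_i$.

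With this identity in hand the equality of fields is immediate. By (\ref{IV}) each $u_i$ lies in $k(x_1, x_2, x_3, x_4, t_2, t_3, t_4)$, and conversely $x_i = u_i^2(x_1-1)+1$ shows each $x_i$ lies in $k(x_1, u_2, u_3, u_4)$; hence the two generating sets produce the same field, which is $k(X)$ by Lemma~\ref{lem1}. The stated relation is then a formal rewriting: from (\ref{IV}) one has $u_i / t_i^3 = x_1 / x_i$, and replacing $x_i$ by $u_i^2(x_1-1)+1$ yields exactly
$$\frac{u_i}{t_i^3} = \frac{x_1}{u_i^2(x_1-1)+1}\,\, ,\,\, i = 2,3,4\,\, .$$

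The real content, as in Lemma~\ref{lem1}, lies in the word \emph{precisely}: I must check that these three relations generate all relations, and this I would do by a degree computation. Clearing denominators, the $i$-th relation reads $t_i^3 = u_i(u_i^2(x_1-1)+1)/x_1 = u_i x_i / x_1$, which shows $t_i^3 \in k(x_1, u_2, u_3, u_4)$; hence $[k(X) : k(x_1, u_2, u_3, u_4)] \le 3^3 = 27$. To see this is an equality I would insert the intermediate field $k(x_1, x_2, x_3, x_4)$. The identity $x_i = u_i^2(x_1-1)+1$ realizes $k(x_1, u_2, u_3, u_4)$ as the extension of $k(x_1, x_2, x_3, x_4)$ obtained by adjoining the three square roots $u_i = \sqrt{(x_i-1)/(x_1-1)}$; since $x_1, x_2, x_3, x_4$ are independent variables, the radicands $(x_i-1)/(x_1-1)$ are multiplicatively independent modulo squares in $k(x_1,x_2,x_3,x_4)^{*}$, so this extension has degree $2^3 = 8$. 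Combined with $[k(X) : k(x_1,x_2,x_3,x_4)] = 6^3 = 216$ from Lemma~\ref{lem1}, multiplicativity of field degrees gives $[k(X) : k(x_1, u_2, u_3, u_4)] = 216/8 = 27$. Matching the two bounds forces each step of the tower obtained by successively adjoining $t_2, t_3, t_4$ to $k(x_1,u_2,u_3,u_4)$ to have degree exactly $3$, so the three displayed cubic relations are the minimal equations and there are no further relations.

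I expect the only genuinely delicate point to be the independence of the three square roots, that is, the equality $[k(x_1, u_2, u_3, u_4) : k(x_1, x_2, x_3, x_4)] = 8$; once it is in place, multiplicativity of field degrees forces everything else. This independence is nonetheless transparent, since each radicand $(x_i-1)/(x_1-1)$ involves its own variable $x_i$, so no nontrivial product of the three can become a square in the rational function field $k(x_1, x_2, x_3, x_4)$.
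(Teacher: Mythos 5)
Your proof is correct, and its first half coincides with the paper's: squaring the definition of $u_i$ and inserting the relation $t_i^6 = x_i^2(x_i-1)/(x_1^2(x_1-1))$ to get $u_i^2 = (x_i-1)/(x_1-1)$, hence $x_i = u_i^2(x_1-1)+1$, which gives both the equality of fields and the displayed form of the relations. Where you genuinely diverge is in justifying the word \emph{precisely}. The paper disposes of this in one line by transporting the presentation along the invertible, triangular change of variables: the system consisting of the definitions $u_i = \frac{x_1}{x_i}t_i^3$ together with the old relations is equivalent to the system consisting of $x_i = u_i^2(x_1-1)+1$ together with the new relations, and eliminating a generator that is given explicitly as a polynomial in the others does not change the presented field. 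You instead rerun, in the new coordinates, the degree count that the paper used for the preceding lemma: the three cubic relations bound $[k(X):k(x_1,u_2,u_3,u_4)]$ above by $27$, the intermediate field $k(x_1,x_2,x_3,x_4)$ shows the degree is exactly $27$, and matching forces the cubics to be the successive minimal polynomials. This is a valid, more explicit route, and it makes transparent why relations of degree $6$ in $t_i$ get replaced by relations of degree $3$ (the square root has been absorbed into $u_i$). One simplification you missed: the point you call delicate, namely $[k(x_1,u_2,u_3,u_4):k(x_1,x_2,x_3,x_4)]=8$, needs no Kummer-independence verification at all. Since adjoining three square roots gives degree at most $8$, and the three cubics give degree at most $27$, multiplicativity yields
$$216 = [k(X):k(x_1,x_2,x_3,x_4)] \le 8 \cdot 27 = 216\,\, ,$$
so both factors must attain their maxima; the independence of the radicands modulo squares comes out as a consequence rather than an input. (Your direct verification is nevertheless correct, and, like the paper's Lemma on characteristic, it tacitly uses ${\rm char}\, k \ne 2$, which is implicit throughout.)
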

\begin{proof}
By the definition 
$$u_i := \frac{x_1}{x_i}t_i^3\,\, ,$$
the equation in Lemma (\ref{lem1}) is 
$$u_i^2 = \frac{x_i -1}{x_1 -1}\,\, .$$
Thus 
$$x_i = u_i^2(x_1 -1) +1\,\, .$$
Therefore
$$k(X) = k(x_1, u_2, u_3, u_4, t_2, t_3, t_4)$$
with relations precisely
$$u_i (= \frac{x_1}{x_i}t_i^3) = \frac{x_1}{u_i^2(x_1 -1) +1}t_i^3\,\, .$$
Dividing both sides by $t_i^3 \not= 0$ in $k(V)$, we complete the proof.

\end{proof}

\begin{lemma}\label{lem3}
$$k(X) = k(u_2, u_3, u_4, t_2, t_3, 
t_4)$$
with relations precisely 
$$\frac{u_2 -t_2^3}{u_2^3 -u_2} = \frac{u_3 -t_3^3}{u_3^3 -u_3} = \frac{u_4 -t_4^3}{u_4^3 -u_4}\,\, .$$
\end{lemma}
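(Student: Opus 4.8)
The plan is to eliminate the variable $x_1$ from the presentation obtained in Lemma \ref{lem2}. The starting point is the relation there,
$$\frac{u_i}{t_i^3} = \frac{x_1}{u_i^2(x_1-1)+1}\,\,,\quad i = 2,3,4\,\,.$$
First I would clear denominators to obtain, for each $i$,
$$u_i^3(x_1-1) + u_i = t_i^3\, x_1\,\,,$$
which is affine-linear in $x_1$. The decisive observation is that the same variable $x_1$ occurs in all three relations, so the natural move is to solve each relation for $x_1$ and equate the outcomes.

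Rearranging the displayed identity gives $x_1(u_i^3 - t_i^3) = u_i^3 - u_i$, hence
$$\frac{1-x_1}{x_1} = \frac{1}{x_1} - 1 = \frac{u_i^3 - t_i^3}{u_i^3 - u_i} - 1 = \frac{u_i - t_i^3}{u_i^3 - u_i}\,\,.$$
Since the left-hand side is independent of $i$, the three expressions $\frac{u_i - t_i^3}{u_i^3 - u_i}$ ($i = 2,3,4$) all coincide, which is exactly the pair of equalities asserted in the statement. Conversely, the $i=2$ identity reads $x_1 = (u_2^3 - u_2)/(u_2^3 - t_2^3)$, which recovers $x_1$ as a rational function of the remaining six generators; thus $x_1$ becomes redundant and $k(X) = k(u_2,u_3,u_4,t_2,t_3,t_4)$.

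It then remains to check that these two relations are \emph{precisely} the relations. I would argue this by reading the passage as a clean elimination: solving the $i=2$ relation of Lemma \ref{lem2} for $x_1$ and substituting into the $i=3$ and $i=4$ relations yields exactly two equations among $u_2,u_3,u_4,t_2,t_3,t_4$, namely the two displayed equalities, and this substitution defines a birational map between the two presentations. A transcendence-degree count corroborates the bookkeeping: Lemma \ref{lem2} has seven generators subject to three relations (transcendence degree $4 = \dim X$), and removing $x_1$ discards one generator together with one relation, leaving six generators and two relations, still of transcendence degree $4$.

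The main thing to watch is this final "precisely" claim, i.e.\ that the elimination neither loses nor introduces a spurious relation. Concretely, one must verify that the denominators $u_i^3 - t_i^3$ and $u_i^3 - u_i$ do not vanish identically in $k(V)$, so that solving for $x_1$ and back-substituting are genuinely inverse birational operations; granting this, the birational equivalence with the Lemma \ref{lem2} presentation shows that the two equalities generate the entire field of relations, which completes the proof.
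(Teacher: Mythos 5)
Your proposal is correct and follows essentially the same route as the paper: clear denominators in the Lemma \ref{lem2} relations, use linearity in $x_1$ to solve $(u_i^3-t_i^3)x_1=u_i^3-u_i$, and equate the resulting expressions $\frac{1}{x_1}-1=\frac{u_i-t_i^3}{u_i^3-u_i}$ across $i=2,3,4$. Your added discussion of the ``precisely'' claim (invertibility of the elimination of $x_1$, nonvanishing of $u_i^3-u_i$ in $k(V)$, and the transcendence-degree check) merely makes explicit what the paper's chain of equivalences leaves implicit in ``This implies the result.''
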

\begin{proof} The equations in Lemma (\ref{lem2}) are linear with respect to $x_1$. 

In fact, by clearing the demoninator
$$u_i^3(x_1-1) +u_i = t_i^3x_1\,\, ,$$
that is (by adding $-t_i^3x_1 +u_i^3 -u_i$ to  both sides), 
$$(u_i^3 -t_i^3)x_1 = u_i^3 - u_i\,\, .$$
Observe that $u_i^3 - u_i \not= 0$ in $k(V)$. Thus, this is equivalent to
$$\frac{1}{x_1} = \frac{u_i^3 -t_i^3}{u_i^3 - u_i}\,\, ,$$
that is, (by adding $-1$ to  both sides), 
$$\frac{1}{x_1} -1 = \frac{u_i -t_i^3}{u_i^3 - u_i}\,\, .$$
This implies the result.

\end{proof}

Observe that $u_i \not= 0$ in $k(V)$ and define:
\begin{equation}\label{V} \,\, v_2 := \frac{1}{u_2}\,\, ,\,\, w_2 := \frac{t_2}{u_2}\,\, ,\,\, v_3 := \frac{1}{u_3}\,\, ,\,\, w_3 := \frac{t_3}{u_3}\,\, ,\,\, 
v_4 := \frac{1}{u_4}\,\, ,\,\, w_4 := \frac{t_4}{u_4}\,\, .\end{equation}
\begin{lemma}\label{lem4}
$$k(X) = k(v_2, v_3, v_4, w_2, w_3, 
w_4)$$
with relations precisely 
$$(v_3^2 -1)(w_2^3-1) = (v_2^2 -1)(w_3^3 -1)\,\, , \,\, (v_4^2 -1)(w_2^3-1) = (v_2^2 -1)(w_4^3 -1)\,\, .$$
\end{lemma}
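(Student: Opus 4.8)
The plan is to start from Lemma~\ref{lem3}, which already expresses $k(X)$ in terms of $u_2,u_3,u_4,t_2,t_3,t_4$ with the single chain of equalities
\[
\frac{u_2-t_2^3}{u_2^3-u_2}=\frac{u_3-t_3^3}{u_3^3-u_3}=\frac{u_4-t_4^3}{u_4^3-u_4}\, .
\]
I would first perform the change of variables \eqref{V}, rewriting each generator $u_i,t_i$ in terms of $v_i,w_i$: since $v_i=1/u_i$ and $w_i=t_i/u_i$, we have $u_i=1/v_i$ and $t_i=w_i/v_i$, and these substitutions are invertible rational expressions (using that $u_i\neq 0$ in $k(V)$, as noted), so $k(v_2,v_3,v_4,w_2,w_3,w_4)=k(u_2,u_3,u_4,t_2,t_3,t_4)=k(X)$ as fields. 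The only real content is to verify that the relations transform into the stated symmetric form.

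The key computational step is to massage the common fraction $\dfrac{u_i-t_i^3}{u_i^3-u_i}$ into something that factors nicely under the substitution. Substituting $u_i=1/v_i$ and $t_i=w_i/v_i$ gives numerator $1/v_i-w_i^3/v_i^3=(v_i^2-w_i^3)/v_i^3$ and denominator $1/v_i^3-1/v_i=(1-v_i^2)/v_i^3$, so after cancelling $v_i^3$ the ratio becomes $\dfrac{v_i^2-w_i^3}{1-v_i^2}$. I would then write this as $\dfrac{(v_i^2-1)+(1-w_i^3)}{1-v_i^2}=-1-\dfrac{w_i^3-1}{v_i^2-1}$. Thus the equality of the $i$-th ratios in Lemma~\ref{lem3} is, after adding $1$ to each side, equivalent to the equality of the quantities $\dfrac{w_i^3-1}{v_i^2-1}$; cross-multiplying the $i=2$ versus $i=3$ instance yields $(v_3^2-1)(w_2^3-1)=(v_2^2-1)(w_3^3-1)$, and similarly the $i=2$ versus $i=4$ instance yields the second stated relation. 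This is exactly the claimed pair of equations.

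The final point to nail down is that these two relations are \emph{precisely} the relations, i.e.\ that nothing has been lost or gained in the transformation. Here I would argue by the invertibility of the substitution itself: since \eqref{V} defines a birational (indeed biregular on the relevant open set) change of coordinates, the ideal of relations among $v_i,w_i$ corresponds bijectively to the ideal of relations among $u_i,t_i$, and the two displayed equations are term-by-term equivalent to the single chain of two equalities in Lemma~\ref{lem3}. The main obstacle, such as it is, is purely bookkeeping: one must confirm that each manipulation (clearing $v_i^3$, adding constants, cross-multiplying) is reversible over $k(V)$ and does not introduce spurious components, which hinges on the nonvanishing conditions $u_i\neq 0$ and $u_i^3-u_i\neq 0$ already established, together with $v_i^2-1\neq 0$ (equivalently $u_i^2\neq 1$) in $k(V)$. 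Granting those, the equivalence of the relation sets is immediate and the proof concludes.
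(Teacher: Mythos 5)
Your proposal is correct and follows essentially the same route as the paper: the same invertible substitution $u_i=1/v_i$, $t_i=w_i/v_i$, the same reduction of the common ratio to $\frac{v_i^2-w_i^3}{1-v_i^2}=-1+\frac{1-w_i^3}{1-v_i^2}$, and the same cross-multiplication to get the stated pair of relations, with the ``precisely'' clause handled by reversibility of each step. (One harmless slip: your intermediate display should read $-1+\frac{w_i^3-1}{v_i^2-1}$ rather than $-1-\frac{w_i^3-1}{v_i^2-1}$, but since equality of the ratios is unaffected by an overall sign, your conclusion stands.)
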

\begin{proof}
By the definition, $v_i = 1/u_i$, $w_i = t_i/u_i$ and $u_i = 1/v_i$, $t_i = w_iu_i = w_i/v_i$, it follows that
$$ k(u_2, u_3, u_4, t_2, t_3, 
t_4) = k(v_2, v_3, v_4, w_2, w_3, 
w_4)\,\, .$$
Observe that 
$$\frac{u_i -t_i^3}{u_i^3 -u_i} = \frac{(1/u_i)^2 - (t_i/u_i)^3}{1 - (1/u_i)^2} = \frac{v_i^2 -w_i^3}{1-v_i^2} = -1 + \frac{1-w_i^3}{1-v_i^2}\,\, .$$
Hence the precise relations in Lemma (\ref{lem3}) are rewritten in terms of the new variables as
$$\frac{1-w_2^3}{1-v_2^2} = \frac{1-w_3^3}{1-v_3^2} = \frac{1-w_4^3}{1-v_4^2}\,\, .$$
By clearing the demoninators, we obtain the precise relations that we  claimed.

\end{proof}

Observe that $v_i -1 \not= 0$ in $k(V)$ and define:
\begin{equation}\label{VI} \,\, s_3 := \frac{v_3-1}{v_2-1}\,\, ,\,\, s_4 := \frac{v_4 -1}{v_2 -1}\,\, .\end{equation}
\begin{lemma}\label{lem5}
$$k(X) = k(s_3, s_4, w_2, w_3, 
w_4)$$
with relations precisely 
$$(s_3-s_4)s_3s_4(w_2^3-1) -(s_3-1)s_3(w_4^3-1) +(s_4-1)s_4(w_3^3-1) = 0\,\, .$$
\end{lemma}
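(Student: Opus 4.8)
The plan is to follow verbatim the pattern of Lemmas \ref{lem1}--\ref{lem4}: record the coordinate change, check that the two fields agree, and then verify that the two relations of Lemma \ref{lem4} fuse into the single displayed one. Since $s_3, s_4$ are rational functions of $v_2, v_3, v_4$, the inclusion $k(s_3, s_4, w_2, w_3, w_4) \subseteq k(X)$ is immediate, and for the reverse inclusion it is enough to recover $v_2$ (and then $v_3, v_4$) from the new generators. To abbreviate I set $a := w_2^3 - 1$, $b := w_3^3-1$, $c := w_4^3 -1$.

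The engine of the computation is the identity $v_i - 1 = s_i(v_2-1)$ for $i = 3,4$, which gives
$$v_i^2 - 1 = (v_i-1)(v_i+1) = s_i(v_2-1)\bigl(s_i(v_2-1)+2\bigr),$$
together with $v_2^2 - 1 = (v_2-1)\bigl((v_2-1)+2\bigr)$. Substituting these into the first relation of Lemma \ref{lem4} and cancelling the common factor $v_2 - 1$ (nonzero in $k(V)$) turns a relation that was quadratic in $v_2-1$ into the \emph{linear} equation
$$(s_3^2 a - b)(v_2-1) = 2(b - s_3 a),$$
while the second relation of Lemma \ref{lem4} becomes $(s_4^2 a - c)(v_2-1) = 2(c - s_4 a)$. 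The coefficient $s_3^2 a - b$ is nonzero in $k(V)$ --- otherwise $b - s_3 a$ would vanish too, forcing $s_3 a(s_3-1)=0$, which is impossible since each of $a$, $s_3$, $s_3-1$ is nonzero in $k(V)$. Hence I may solve
$$v_2 - 1 = \frac{2(b - s_3 a)}{s_3^2 a - b} = \frac{2(c - s_4 a)}{s_4^2 a - c}.$$
The first equality expresses $v_2$, and with it $v_3 = s_3(v_2-1)+1$ and $v_4 = s_4(v_2-1)+1$, through $s_3, s_4, w_2, w_3, w_4$, proving the field equality; the second equality is the only surviving relation.

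It then remains to clear denominators in that last equality, i.e.\ to expand
$$(b - s_3 a)(s_4^2 a - c) = (c - s_4 a)(s_3^2 a - b),$$
cancel the common term $-bc$ and the overall factor $a$, and recognize the result as
$$(s_3-s_4)s_3 s_4\, a - (s_3-1)s_3\, c + (s_4-1)s_4\, b = 0,$$
which is precisely the claimed relation. I expect this last polynomial identity to be the only mildly delicate point, though it is just careful bookkeeping of the terms quadratic in $s_3, s_4$. A transcendence-degree count then closes the argument: Lemma \ref{lem4} presents $k(X)$ by six generators subject to two relations, hence with transcendence degree $4$, whereas the new presentation uses five generators; the single relation above therefore cannot be accompanied by any hidden further relation, so it is indeed the precise one.
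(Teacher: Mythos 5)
Your proof is correct and follows essentially the same route as the paper's: substitute $v_i - 1 = s_i(v_2-1)$ into the relations of Lemma \ref{lem4}, cancel the factor $v_2-1$, observe the resulting equations are linear in $v_2$ with nonvanishing leading coefficient, solve for $v_2$ (which both eliminates it from the generators and leaves the equality of the two fractions as the sole remaining relation), and clear denominators to get the displayed cubic. The only differences are cosmetic: you actually justify the nonvanishing of $s_i^2(w_2^3-1)-(w_i^3-1)$, which the paper merely asserts, and you reach the final cubic by direct cross-multiplication (cancelling $-bc$ and the factor $a$) where the paper uses a small rewriting trick; your closing transcendence-degree count is a harmless supplement to, not a replacement for, the elimination argument that does the real work.
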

\begin{proof} The defining equation of $s_i$ is linear in both $s_i$ and $v_i$, hence  it follows that
$$k(X) = k(v_2, v_3, v_4, w_2, w_3, 
w_4) = k(v_2, s_3, s_4, w_2, w_3, w_4)\,\, .$$
Since 
$$ v_i + 1 = (v_i -1) + 2\,\, ,\,\, v_i^2 -1 = (v_i -1)(v_i+1) = s_i(v_2-1)(s_i(v_2-1) +2)\,\, $$
and $v_2 -1 \not= 0$ in $k(V)$, 
the relations in Lemma (\ref{lem4}) are precisely
$$s_3(s_3(v_2-1)+2)(w_2^3-1) = (v_2+1)(w_3^3-1)\,\, ,$$
$$s_4(s_4(v_2-1)+2)(w_2^3-1) = (v_2+1)(w_4^3-1)\,\, .$$
Both are linaer in terms of $v_2$, more explicitly, these two equations are equivalent to
$$(s_3^2(w_2^3 -1) -(w_3^3-1))(v_2-1) = -2s_3(w_2^3-1) + 2(w_3^3 -1)\,\, .$$
$$(s_4^2(w_2^3 -1) -(w_4^3-1))(v_2-1) = -2s_4(w_2^3-1) + 2(w_4^3 -1)\,\, .$$
Observe that 
$$-s_i^2(w_2^3-1) + (w_i^3 -1) \not= 0$$ in $k(V)$ and recall that $k$ is not of characteristic $2$. Then 
these two equations are equivalent to
$$\frac{v_2 -1}{2} = \frac{-s_3(w_2^3-1) + (w_3^3 -1)}{s_3^2(w_2^3 -1) -(w_3^3-1)} = \frac{-s_4(w_2^3-1) + (w_4^3 -1)}{s_4^2(w_2^3 -1) -(w_4^3-1)}\,\, .$$
Thus 
$$k(X) = k(s_3,s_4, w_2, w_3, w_4)\,\, ,$$
with the above precise relation, that is (by taking the inverse and multiply by $-1$)
$$\frac{s_3^2(w_2^3 -1) - (w_3^3-1)}{s_3(w_2^3-1) - (w_3^3 -1)} 
= \frac{s_4^2(w_2^3 -1) - (w_4^3-1)}{s_4(w_2^3-1) - (w_4^3 -1)}\,\, .$$
Observe that 
$$s_i^2(w_2^3 -1) - (w_i^3-1) = (s_i^2 -s_i)(w_2^3 -1) + (s_i(w_2^3 - 1) - (w_i^3 -1))\,\, .$$ 
Thus the equation above is equivalent to
$$\frac{(s_3^2-s_3)(w_2^3 -1)}{s_3(w_2^3-1) - (w_3^3 -1)} +1 = \frac{(s_4^2-s_4)(w_2^3 -1)}{s_4(w_2^3-1) - (w_4^3 -1)} +1\,\, ,$$
whence, equivalent to
$$\frac{(s_3^2-s_3)}{s_3(w_2^3-1) - (w_3^3 -1)} = \frac{(s_4^2-s_4)}{s_4(w_2^3-1) - (w_4^3 -1)}\,\, ,$$
by $w_2^3 -1 \not= 0$ in $k(V)$. 
By clearing demoninators, the last equation is equivalent to
$$(s_3^2-s_3)(s_4(w_2^3-1) - (w_4^3 -1)) - (s_4^2-s_4)(s_3(w_2^3-1) - (w_3^3 -1)) = 0\,\, ,$$
which is nothing but the equation claimed (just make the equation as an equation with respect to $w_i^3 -1$). 

\end{proof}

To proceed with  the proof of Theorem (\ref{thm1}),
consider the affine hypersurface $H$ defined by
$$(s_3-s_4)s_3s_4(w_2^3-1) -(s_3-1)s_3(w_4^3-1) + 
(s_4-1)s_4(w_3^3-1) = 0\,\, $$
in the affine space ${\AAA}^5$ with affine coordinates $(s_3, s_4, w_2, w_3, w_4)$. 

Since $X$ is of dimension $4$, Lemma (\ref{lem5}) means that $X$ is birational to $H$.

\begin{rem}

 The projection $\pi : H \to {\AAA}^2$ defined by $$(s_3, s_4, w_2, w_3, w_4) \mapsto (s_3, s_4)$$ makes $H$ a  fibration  of cubic surfaces
 over the affine space ${\AAA}^2$ with affine coordinates $(s_3, s_4)$. 
 
 Let $\eta$ be the 
generic point of 
${\AAA}^2$ and let $H_{\eta}$ be  the generic fiber. Then the projective completion $\overline{H}_{\eta}$ of $H_{\eta}$ in ${\mathbb P}_{\eta}^3$ is a smooth cubic surface $S$  over the field $k(s_3, s_4)$ (by the Jacobian criterion and the fact that $k$ is not of characteristic $3$) with a rational point $(1,1,1) \in \overline{H}_{\eta}(k(s_3, s_4))$. 

\end{rem}

The above remark shows  that it is sufficient to show that $\overline{H}_{\eta}$ is unirational over $K: = k(s_3, s_4)$ (i.e., there is a dominant rational map ${\mathbb P}_{\eta}^2 --\to \overline{H}_{\eta}$ over $k(s_3, s_4)$): because then, $k(s_3, s_4)$ being  purely transcendental, this means that there is a dominant rational map ${\mathbb P}_{k}^{4} --\to X$ over $k$, that is, $X$ is unirational over $k$. 

The $K$-unirationality of $S= \overline{H}_{\eta}$ follows 
  by a theorem of Segre (see \cite{Seg43}, see also the extension to higher dimensional cubic hypersurfaces done by Koll\'ar in \cite[Theorem 1]{Ko02}),
  asserting that a smooth cubic surface with a $K$-rational point is  $K$-unirational.
  
  In the next section we shall give an upper bound of the $K$-unirationality degree, and show that $S$ is not $K$-rational.
  
\section{Diagonal cubic surfaces and their rationality}

We begin this  section  recalling several known results, which immediately imply the claimed assertion  that our cubic surface $S$ is not $K$-rational.

Let $S$ be a nonsingular cubic surface defined over a field $K$: then Swinnerton-Dyer completed earlier results by B.Segre in 
\cite{Seg51} showing 

\begin{thm}{\bf (Swinnerton-Dyer \cite{SD70})}
A smooth cubic surface $S$ defined over a field $K$ is birational to $\PP^2_K$ if and only if 

1) it has a $K$-rational point

and

2) $S$  contains a set $\Sigma_n$ of pairwise disjoint lines which is defined over $K$ (i.e., invariant under the Galois group  $Gal(\bar{K},K)$)
and has cardinality $n \in \{2,3,6\}$.

\end{thm}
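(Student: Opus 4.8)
The plan is to treat the two implications separately: the ``if'' direction reduces, after a single $K$-contraction, to known rationality statements for del Pezzo surfaces of degree $5,6,9$, while the ``only if'' direction carries the real content and rests on the Galois-module structure of $\Pic(\bar S)$.

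\emph{Sufficiency.} Assume (1) and (2), and let $\Sigma_n$ be the given Galois-stable set of $n$ pairwise disjoint lines. Since $\Sigma_n$ is stable under $G := \Gal(\bar K/K)$, its support is a reduced curve defined over $K$; as its components are pairwise disjoint $(-1)$-curves, Castelnuovo contraction over $\bar K$ descends, by Galois equivariance, to a birational $K$-morphism $f : S \to S'$ onto a smooth projective surface $S'$ over $K$ with $-K_{S'}$ ample, i.e.\ a del Pezzo surface of degree $K_{S'}^2 = 3+n$. Thus $n \in \{2,3,6\}$ produces $S'$ of degree $5$, $6$ or $9$, and the $K$-point of $S$ maps to a $K$-point of $S'$. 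I would then invoke the classical facts: a del Pezzo surface of degree $5$ is always $K$-rational (and automatically has a $K$-point); one of degree $6$ with a $K$-point is $K$-rational; and one of degree $9$ is a Severi--Brauer surface, hence $\iso \PP^2_K$ as soon as it has a $K$-point. In each case $S'$, and therefore $S$, is birational to $\PP^2_K$.

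\emph{Necessity.} Conversely assume $S$ is birational to $\PP^2_K$. Condition (1) is immediate, since $\PP^2_K$ has $K$-points and a $K$-birational map carries them to $S$ on a dense open set. For (2), recall that $\Pic(\bar S) \iso \Z^7$ is a $G$-module on which $G$ acts through a finite subgroup of $W(E_6)$ preserving the intersection form and $K_{\bar S}$, permuting the $27$ lines. The first key step is that a $K$-rational cubic surface cannot be $G$-minimal: a $G$-minimal del Pezzo surface of degree $3$ has $H^1(G,\Pic \bar S) \neq 0$, a birational invariant obstructing rationality (Manin, Swinnerton-Dyer). Hence some Galois-stable set of pairwise disjoint lines exists and can be contracted over $K$ as above.

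The remaining, and main, point is that this Galois-stable disjoint set can be chosen of cardinality exactly $2$, $3$ or $6$; this is where the combinatorics of the $27$ lines meets the rationality constraint. The mechanism is illustrated by the size-$4$ case: a Galois-stable disjoint set of four lines canonically determines the three lines meeting none of them, among which exactly one disjoint pair occurs, and that pair is then a Galois-stable disjoint set of size $2$. In the same spirit the larger maximal disjoint configurations are shown to contain or determine a Galois-stable subconfiguration of admissible size, whereas a single Galois-stable line contracts only to a del Pezzo surface of degree $4$, which need not be $K$-rational, correctly excluding $n=1$. Running this bookkeeping over the $G$-orbit structures on the $27$ lines that can occur when $S$ is rational pins the admissible cardinalities down to $\{2,3,6\}$. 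I expect this last analysis to be the main obstacle: translating $K$-rationality into the non-minimality of $S$ and the vanishing of the relevant $H^1(G,\Pic \bar S)$, and then extracting from the Galois action on the lines a disjoint orbit of the right size. The sufficiency direction, by contrast, is routine once the three low-degree del Pezzo rationality facts are cited.
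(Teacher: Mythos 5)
First, a point of calibration: the paper does not prove this statement at all --- it is quoted as Swinnerton-Dyer's theorem \cite{SD70} and simply used to show that the diagonal cubic surface $S$ arising there is not $K$-rational. So your proposal can only be measured against correctness and the literature. Your sufficiency half is essentially the standard argument and is sound: a $\Gal(\bar K, K)$-stable set of $n$ pairwise disjoint lines contracts over $K$ (Castelnuovo contraction descends by Galois equivariance) to a del Pezzo surface of degree $3+n \in \{5,6,9\}$ carrying a $K$-point, and the classical rationality statements for del Pezzo surfaces of degree $5$ and $6$ and for Severi--Brauer surfaces finish that direction.

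The necessity direction, however, contains one outright error and one genuine gap. The error: it is not true that a $G$-minimal cubic surface has $H^1(G,\Pic \bar S)\neq 0$. When the Galois group acts on the $27$ lines through the full Weyl group $W(E_6)$ (the generic case), the surface is minimal and yet $H^1(G,\Pic \bar S)=0$; the possible values of this group for smooth cubic surfaces are $0$, $\Z/2$, $(\Z/2)^2$, $\Z/3$, $(\Z/3)^2$, and vanishing is perfectly compatible with minimality (compare the computations in \cite{CTKS} for diagonal surfaces). The statement you actually need --- a minimal smooth cubic surface is never $K$-rational --- is Segre's theorem \cite{Seg51}, whose proof goes through the Noether--Fano/Manin--Iskovskikh method of untwisting linear systems, not through a cohomological obstruction; so the non-minimality of a $K$-rational cubic surface cannot be obtained the way you propose. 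The gap: even granting non-minimality, you only obtain a Galois-stable pairwise disjoint set of lines of some cardinality $n\in\{1,\dots,6\}$. Your reduction handles $n=4\Rightarrow n=2$ correctly (the three lines meeting none of the four contain exactly one disjoint pair, which is then stable), and $n=5\Rightarrow n=6$ works the same way, but the case where every Galois-stable disjoint set has cardinality $1$ is not addressed at all: there one must contract the stable line to a del Pezzo surface of degree $4$ and rerun the entire analysis on it (non-rationality of minimal degree-$4$ del Pezzo surfaces, the orbit combinatorics of its $16$ lines, and the fact that stable disjoint sets downstairs need not lift to lines on $S$, since the strict transform of a $(-1)$-curve through the blown-down point is not a line). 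That iteration, with its own case analysis, is precisely the content of \cite{SD70}, and ``running this bookkeeping'' is the part of the proof that cannot be waved at.
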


If one drops the second condition, then one has 

\begin{thm}{\bf (Segre \cite{Seg43})}
A smooth cubic surface $S$ defined over a field $K$ is $K$-unirational if and only 
 it has a $K$-rational point.
\end{thm}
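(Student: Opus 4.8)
The plan is to prove both implications, the forward one being essentially formal and the reverse one carrying the content of Segre's theorem. For the direction that $K$-unirationality forces a $K$-rational point, I would invoke Nishimura's lemma: any rational map from a smooth $K$-variety possessing a $K$-rational point to a proper $K$-variety produces a $K$-rational point on the target. Since a $K$-unirational $S$ receives a dominant rational map from $\PP^2_K$, and $\PP^2_K$ is smooth with $\PP^2(K)\neq\emptyset$ while $S$ is projective, this immediately yields $S(K)\neq\emptyset$.

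For the substantive direction, given $p\in S(K)$, I would carry out Segre's tangent-plane construction. The first task is to reduce to a \emph{good} point: a $K$-point $p$ lying on none of the $27$ lines and whose tangent-plane section $C_p:=S\cap T_pS$ is an irreducible cubic with an ordinary node at $p$. Such points are Zariski dense over $\bar K$, and a $K$-rational good point is manufactured from the given one by the secant involution $\tau_p\colon S\dashrightarrow S$, sending $q$ to the residual intersection of the line $\overline{pq}$ with $S$, which is defined over $K$ and spreads a single $K$-point into a dense set of them (the finite-field case must be treated separately, smooth cubic surfaces over finite fields being unirational by other arguments). Once $p$ is good, I project the nodal cubic $C_p$ from its node $p$ inside the plane $T_pS\cong\PP^2$: a line through $p$ meets $C_p$ at $p$ with multiplicity $2$ and in a single residual point, so this yields a parametrization $\phi\colon\PP^1\dashrightarrow C_p\subset S$ defined over $K$.

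The unirational parametrization of the surface then comes from sliding the base point along this curve. For the generic point $s$ of $\PP^1$ set $q=\phi(s)\in S(K(s))$; this is again a good point over $K(s)$, its tangent-plane section $C_q$ is a nodal cubic with node at the $K(s)$-point $q$, and projecting from $q$ gives a parametrization $\phi_q\colon\PP^1\dashrightarrow C_q\subset S$ defined over $K(s)$. The assignment $(s,t)\mapsto\phi_{\phi(s)}(t)$ is then a rational map $\Psi\colon\PP^1\times\PP^1\dashrightarrow S$ whose target coordinates lie in $K(s,t)$, hence is defined over $K$; precomposing with a birational $\PP^2\dashrightarrow\PP^1\times\PP^1$ exhibits the desired dominant map from $\PP^2_K$, establishing $K$-unirationality.

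The one genuine point to verify, and the step I expect to be the main obstacle, is that $\Psi$ is dominant. For fixed generic $s$ the image is the curve $C_{\phi(s)}$, so dominance amounts to saying that the curves $C_{\phi(s)}$ move in a genuinely two-dimensional family rather than sweeping out a single curve, equivalently that the differential of $\Psi$ has rank $2$ at a general point. I would confirm this either by a tangent-space computation at a general point of $S$, or more geometrically by observing that as $\phi(s)$ varies the tangent planes $T_{\phi(s)}S$ trace a non-constant one-parameter family of distinct planes (since $\phi$ is non-constant and $S$ is not contained in a plane), so the residual cubics $C_{\phi(s)}=S\cap T_{\phi(s)}S$ are pairwise distinct curves whose union cannot be one-dimensional. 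The reduction to a good point and this dominance check are precisely where the characteristic hypotheses and the finite-field subtleties enter; the remaining ingredients are the elementary projective geometry of nodal plane cubics.
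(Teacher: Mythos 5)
The paper never proves this statement: it is quoted as a known result, with the proof delegated to Segre \cite{Seg43} (and to Koll\'ar \cite{Ko02} for the higher--dimensional extension), so there is no internal argument to compare against line by line. Your proposal is a reconstruction of the classical Segre argument, and much of it is sound: Nishimura's lemma correctly handles the direction ``unirational $\Rightarrow$ $K$-point'' (a direction the paper does not even discuss), and the double tangent--plane construction $\Psi(s,t)=\phi_{\phi(s)}(t)$ together with your dominance argument is exactly the classical mechanism. In fact dominance, which you single out as the expected main obstacle, is the easy step: the Gauss map of a smooth cubic surface is a finite morphism, so distinct generic points of $C_p$ have distinct tangent planes, and a one--parameter family of pairwise distinct irreducible curves sweeps out a two--dimensional set, as you say.

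The genuine gap is the step you dispatch in one sentence, the reduction to a good point, and your mechanism for it does not work. The secant involution $\tau_p$ cannot ``spread a single $K$-point into a dense set'': to evaluate it you need a second point to join to $p$ (the line $\overline{pp}$ is undefined, and replacing it by tangent lines through $p$ only lands you on $C_p$ again), and in any case $\tau_p$ is a birational involution defined over $K$, so it permutes $S(K)$ and manufactures no new $K$-points. What is true, and what you should say instead, is: (i) ``good'' need only mean ``not on any of the $27$ lines'', for then $C_p$ contains no line and is automatically an irreducible singular cubic (nodal or cuspidal, both rational by projection from the singular point), and one runs your construction at the generic point of $C_p$, which is again off the lines since $C_p$ is irreducible and is not a line; (ii) if $p$ does lie on lines of $S$, the set of lines through $p$ (of cardinality $1$, $2$ or $3$) is Galois--stable, and unless $p$ is an Eckardt point whose three lines form a single Galois orbit, one of the lines in $C_p$ is forced to be $K$-rational, and the pencil of planes through a $K$-rational line gives a conic bundle with a rational bisection, hence degree-$2$ unirationality (this is Case 2 of the paper's own proof of its degree-$6$ theorem in the same section); but (iii) the residual case of a $K$-rational Eckardt point whose three lines are permuted transitively by Galois, with possibly no other $K$-point available to start from, is not covered by anything in your proposal, and neither are finite fields (which you explicitly defer) nor the characteristic $2$ and $3$ pathologies. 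That residual case is precisely where the real content of Segre's theorem lies and why Koll\'ar's paper \cite{Ko02} exists; as written, your argument establishes the theorem only for points off the lines or with a $K$-rational line through them, so it is incomplete rather than merely unpolished.
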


Recall now that a {\bf diagonal cubic surface $S$} is a cubic surface 
$$  S \subset \PP^3_K , \ S = \{ (x_1,x_2,x_3, x_4) | \sum_{i=1}^4  a_i x_i^3 = 0 \},$$
and one says that $S$ is defined over $K$ if $a_i \in K, \forall i=1, \dots, 4$.

Observe that, in the case where the field $K$ is moreover algebraically closed, then such a diagonal cubic surface $S$ is projectively
equivalent to the Fermat cubic surface. For diagonal cubic surfaces it is easy to find the lines lying on it,
moreover they have a special geometry; recall for this the definition of {\bf Eckardt points}: these are the  points $P$ where the tangent plane intersects
the surface $S$ in a set of three lines passing through $P$.

\begin{proposition}
Let $K$ be a field of characteristic $\neq 2,3$ and containing a primitive third root of unity $\omega$.
Let $S$ be, as above, a diagonal cubic surface defined over $K$ (i.e., $a_i \in K$). 
$$  S \subset \PP^3_K , \ S = \{ (x_1,x_2,x_3, x_4) | \sum_{i=1}^4  a_i x_i^3 = 0 \}.$$

Then the 27 lines of $S$, defined over a field extension of $K$, are the three sets of 9  lines
obtained, for each partition $\{1,2,3,4\} = \{i,j\} \cup \{h,k\}$,
by the equations
$$ a_i x_i^3 +  a_j x_j^3 =  a_h x_h^3 +  a_k x_k^3 = 0,$$
i.e., 
$$ x_i = \la_{ij} x_j , \  x_h = \la_{hk} x_k , a_i \la_{ij}^3  + a_i=0 , \ a_h  \la_{hk}^3 +  a_k = 0.$$

Moreover, the surface possesses exactly 18 Eckardt points, defined on the algebraic closure of $K$
by the equations $x_i=x_j=0,1 \leq  i,j \leq 4$.
\end{proposition}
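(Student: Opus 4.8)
The plan is to let both counts---the $27$ lines and the $18$ Eckardt points---fall out of one explicit incidence analysis, with smoothness used only to pin down the totals. First I would record smoothness: since $\operatorname{char}K\neq 3$ and every $a_i\neq 0$, the partials $3a_ix_i^2$ vanish simultaneously only at $x_1=\dots=x_4=0$, which is not a point of $\PP^3$. Hence $S$ is smooth, and remains smooth over $\overline{K}$, so it carries exactly $27$ lines defined over $\overline{K}$. It therefore suffices to exhibit $27$ distinct lines of the asserted shape.

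For the lines I would fix a partition $\{1,2,3,4\}=\{i,j\}\cup\{h,k\}$ and use that, over a field containing $\omega$, the form $a_ix_i^3+a_jx_j^3$ factors into three linear forms $x_i-\lambda x_j$ with $\lambda^3=-a_j/a_i$, and likewise $a_hx_h^3+a_kx_k^3$ factors through $x_h-\mu x_k$ with $\mu^3=-a_k/a_h$. Each pair $(\lambda,\mu)$ cuts out a line on which $\sum_\ell a_\ell x_\ell^3=(a_ix_i^3+a_jx_j^3)+(a_hx_h^3+a_kx_k^3)=0$, so it lies on $S$; this yields $3\times 3=9$ lines per partition and $27$ in all. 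Distinctness is then routine: within a partition the pair $(\lambda,\mu)$ is recovered from the line, and a line from the partition $\{12\mid 34\}$ cannot coincide with one from $\{13\mid 24\}$ since on the former $x_1/x_2$ is constant while $x_1/x_3$ is not. Matching the bound $27$ shows these are all the lines.

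The point of departure for the Eckardt count is the observation that on a line $x_i=\lambda x_j$ (with $\lambda\neq 0$) the vanishing of $x_j$ forces $x_i=0$; thus every point of $S$ lying on a line---in particular every Eckardt point---has either $0$ or exactly $2$ vanishing coordinates. For a point $P$ with all coordinates nonzero I would set $b_\ell:=a_\ell p_\ell^3$ and note that the (unique) line of the partition separating $\{i,j\}$ passes through $P$ exactly when $b_i+b_j=0$. Since $P\in S$ gives $b_1+b_2+b_3+b_4=0$, imposing the three conditions $b_1+b_2=b_1+b_3=b_1+b_4=0$ (one per partition) forces $b_2=b_3=b_4=-b_1$ and hence $-2b_1=0$; as $\operatorname{char}K\neq 2$ this yields $b_1=0$, i.e.\ $p_1=0$, a contradiction. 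So no totally nonzero point lies on three lines. For a point $P$ with $x_i=x_j=0$ and $a_hp_h^3+a_kp_k^3=0$, the three lines of the partition $\{i,j\}\cup\{h,k\}$ with $\mu=p_h/p_k$ all pass through $P$ (the condition $x_i=\lambda x_j$ is vacuous there), while a line from either other partition would force a nonzero cube-root coefficient to vanish; thus $P$ lies on exactly three coplanar lines and is an Eckardt point.

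Putting this together: there are $6$ coordinate lines $x_i=x_j=0$, each meeting $S$ in $3$ points, giving exactly $18$ Eckardt points, all of the stated form. I expect the one genuinely delicate step to be excluding extra concurrences at totally nonzero points, which is precisely where $\operatorname{char}K\neq 2$ is used; as an independent check one may rescale $y_\ell=a_\ell^{1/3}x_\ell$ over $\overline{K}$ to reduce $S$ to the Fermat cubic, whose $18$ Eckardt points at the coordinate positions are classical and whose coordinate hyperplanes correspond to those of $S$.
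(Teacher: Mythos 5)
Your proof is correct, but it takes a genuinely different route from the paper's. For the $27$ lines you give the full factorization-and-distinctness argument (the paper simply declares this assertion clear), and for the Eckardt points you run a direct incidence analysis on the explicitly listed lines: every point of $S$ on a line has $0$ or exactly $2$ vanishing coordinates; a totally nonzero point cannot carry three concurrent lines, because the three partition conditions $b_1+b_2=b_1+b_3=b_1+b_4=0$ together with $\sum_\ell b_\ell=0$ force $-2b_1=0$, excluded by $\operatorname{char}K\neq 2$; and each of the points with $x_i=x_j=0$ carries exactly three coplanar lines, hence is Eckardt, giving $6\times 3=18$ points in all. The paper argues instead via the Hessian surface $H_S=\{x_1x_2x_3x_4=0\}$: the parabolic curve $\Gamma=S\cap H_S$ is a union of four smooth plane cubics, an Eckardt point must be a singular point of $\Gamma$ (a classical fact about the parabolic curve, used without proof), these singular points are exactly the pairwise intersections of the four cubics, i.e.\ the points $x_i=x_j=0$ of $S$, and conversely at each such point the tangent plane $\{y_h a_h x_h^2+y_k a_k x_k^2=0\}$ is computed to cut $S$ in three concurrent lines. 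Your approach buys self-containedness: it needs only the classical count of $27$ lines on a smooth cubic over $\overline K$, it proves the first assertion the paper omits, and it makes visible exactly where $\operatorname{char}K\neq 2$ is used; it also makes the count transparently $6\times 3=18$ (the paper's phrase ``nine points'' is at best a count per cubic, not the total). The paper's approach is shorter granted the parabolic-curve fact, and it locates the Eckardt points geometrically without first enumerating the lines.
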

\Proof
The first assertion being clear, we prove the second assertion.
Since the coefficients $a_i \neq 0$, due to the smoothness of $S$, the complete intersection $\Ga$ of $S$ with the Hessian surface $H_S = \{x_1 x_2 x_3 x_4 =0 \}$
of $S$ consists of 4 smooth plane cubics, which intersect in the nine points $x_i = x_j = 0, a_h x_h^3 + a_k x_k^3=0$,
which are therefore the singular points of $\Ga$. Notice that an Eckardt point must be a singular point of $\Ga$; conversely, 
at one such point the tangent plane is the plane
$$  \{ y |  y_h a_h x_h^2 + y_k a_k x_k^2=0\}. $$
Since $a_h x_h^3 + a_k x_k^3=0$, the intersection of $S$ with the tangent plane is the union of the three lines
$$  \{ y | y_k x_h = y_h x_k, \   a_i y_i^3 +  a_k y_k^3=0\}. $$
\qed

\begin{rem}
The maximum number of Eckardt points that a smooth cubic surface can have is exactly 18. 

This can be shown using the  model of the cubic surface as the blow-up of the plane in six points not lying on a conic.
The analysis using 
the parabolic curve $\Ga$, which  is a complete intersection of type $(3,3)$, hence has  arithmetic genus equal to 19,
and  degree equal to 12, seems more complicated. 

\end{rem}

\begin{thm}
Let $K$ be a field of characteristic $\neq 2,3$ and containing a primitive third root of unity $\omega$.
And let $S$ be a smooth diagonal cubic surface defined over $K$ and with equation
$$    S = \{ (x_1,x_2,x_3, x_4) | a_1 (x_1^3-x_4^3) +  a_2 (x_2^3-x_4^3) +  a_3 (x_3^3-x_4^3) = 0 \}.$$
Then there is a dominant rational map $\PP^2_K \ra S$ of degree at most 6.
\end{thm}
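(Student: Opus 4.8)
The plan is to realize the required map through Beniamino Segre's sweeping construction based at a $K$-rational point, and then to bound its degree by a polar computation. Because $\omega\in K$ and the coefficient of $x_4^3$ equals $-(a_1+a_2+a_3)$, each of the $27$ points $(\omega^a:\omega^b:\omega^c:1)$ lies on $S$ and is $K$-rational; since all their coordinates are nonzero, the preceding Proposition shows that none of them is an Eckardt point. First I would fix such a point $P$ lying on none of the $27$ lines of $S$, and consider the tangent plane $\Pi_P$ to $S$ at $P$ together with the tangent-plane section $C_P:=\Pi_P\cap S$. This is a geometrically integral plane cubic with a $K$-rational double point at $P$: any line component of $C_P$ would be a line of $S$, and were $C_P$ reducible its singular point $P$ would lie on two components, forcing $P$ onto a line of $S$, which is excluded. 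A geometrically integral plane cubic with a rational double point is $K$-rational, parametrized by the pencil of lines through $P$ inside the plane $\Pi_P$ (a $\PP^1_K$), sending a line to its residual intersection with $C_P$; this yields a birational $K$-morphism $t\mapsto Q(t)$ from $\PP^1_K$ onto $C_P\subset S$.

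Next I would sweep $S$ out by the tangent sections taken along this curve. Over the generic parameter the point $Q(t)\in S(K(t))$ is general, so $C_{Q(t)}:=\Pi_{Q(t)}\cap S$ is again an integral cubic with $K(t)$-rational double point $Q(t)$, hence $C_{Q(t)}\cong\PP^1_{K(t)}$ with a coordinate $s$. The assignment $\Phi:(t,s)\mapsto$ (the point of $C_{Q(t)}$ of parameter $s$) is then a dominant rational map, defined over $K$, from the total space $W$ of this family to $S$. The surface $W$ is a $\PP^1$-fibration over the rational base $C_P\cong\PP^1_K$, i.e.\ a $K$-rational ruled surface; fixing a birational map $\PP^2_K\ra W$ over $K$ and composing with $\Phi$ produces a dominant rational map $\PP^2_K\ra S$ over $K$, as wanted.

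It then remains to check that the degree is at most $6$. For a generic $x\in S$ the fibre $\Phi^{-1}(x)$ consists of the pairs $(t,s)$ with $x\in C_{Q(t)}$; as $x$ already lies on $S$, this is equivalent to the tangent plane $\Pi_{Q(t)}$ passing through $x$. Writing $F$ for the defining cubic form and $G_x(z):=\sum_i x_i\,\partial_i F(z)$ for the (quadratic) polar of $x$, the condition $x\in\Pi_Q$ reads $G_x(Q)=0$. Hence the admissible points $Q$ are exactly those of $C_P\cap\{G_x=0\}$, which inside the plane $\Pi_P$ is the intersection of the cubic $C_P$ with the conic $\{G_x=0\}\cap\Pi_P$: by B\'ezout there are at most $6$ of them. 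Each admissible $Q$ determines $t$, and then $x\in C_{Q(t)}$ determines $s$ uniquely for generic $x$; therefore $\deg\Phi\le 6$.

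The main difficulty I foresee is not this count but the \emph{defined-over-$K$} bookkeeping, and above all securing a starting point $P$ in sufficiently general position, so that $C_P$ and the generic section $C_{Q(t)}$ are integral rational curves and $W$ is genuinely $K$-rational. The delicate case is when every $K$-rational point of $S$ happens to lie on a line of $S$; then one would instead begin with a tangent section that splits off a $K$-rational line of $S$ and parametrize that line (or a residual conic carrying a rational point), the polar estimate bounding the degree by $6$ being unaffected. For the surfaces arising in this paper, where $K=k(s_3,s_4)$ and the $a_i$ are the explicit coefficients above, the point $(1:1:1:1)$ lies on no line over the function field, so the construction applies directly.
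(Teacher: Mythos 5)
Your proposal is correct and takes essentially the same route as the paper's own proof: start from one of the 27 obvious $K$-rational points with coordinates cubic roots of unity, use the Proposition to rule out Eckardt points, sweep $S$ by tangent-plane sections along the rational curve $C_P$, and bound the degree by intersecting $C_P$ with the polar conic of a general $x \in S$ --- which is exactly the paper's count of $6$ (there phrased as intersecting the degree-$6$ image of $C_P$ on the dual surface with the plane $x^*$). The only organizational difference is that the paper runs a case analysis on the reducibility type of $C_P$ (obtaining degree $2$ whenever a $K$-rational line splits off), which coincides with the fallback you sketch for the delicate case where the chosen point lies on a line of $S$; that fallback does require the brief extra remark that the split-off line is Galois-stable, hence $K$-rational, as in the paper's Cases 2 and 3.
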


\Proof
Observe first of al that $S$ is smooth if and only if $a_1 a_2 a_3 (a_1 + a_2 +a_3) \neq 0$.

Second, $S$ contains the 27 points whose coordinates are cubic roots of $1$. As we have shown, these are not Eckardt points.

Assume now that $P \in S$ is a $K$-rational point, so that the curve
$$C_P = S \cap (T_PS)  $$
intersection of the surface with the tangent plane in $P$ is defined over $K$, and has a singular point at $P$.

Case 1): $C_P$ is irreducible.

Then $C_P$ is birational to $\PP^1_K$, and we have a birational parametrization $\la \in K \mapsto P_{\la} \in C_P$.
Observe that the plane $(T_PS) $ meets the lines contained in $S$ in a finite set, hence through the general point
$P_{\la} $ passes no line contained in $S$.

Then we let $D_{\la} = S \cap (T_{ P_{\la}}S)  $,  which is in general an irreducible cubic, singular in $ P_{\la}$:
hence there is a dominant rational map $\psi : \PP^1_K \times \PP^1_K \ra S$.

In this case the degree of $\psi$ is equal to 6.

In fact,  $C_P$  maps to a curve $C'$  of degree 6 on the dual surface  $S^{\vee}$ of $S$.

And the condition for a point $z \in S$ that $z \in T_yS$  means that the dual plane $z^*$ contains the point $(T_yS)^* $.

So, if we intersect the curve $C'$ with the plane $z^*$ we obtain 6 points:  this show that the degree of the map $\psi$  is 6.

Case 2):  $C_P$ has two components, a line $L$ and a conic $Q$.

In this case $L$ is $K$-rational.  In general, if a cubic $S$ contains such a $K$-rational line, then the pencil of planes $\pi_t \supset L$
yields a  dominant rational map $\phi : \PP^1_K \times \PP^1_K \ra S$ of degree 2. Because for each point $P' \in L$ the tangent plane
in $P'$ intersects $S$ in $ L \cup Q_{P'}$, and for general $P' \in L$, $K$-rational,  the conic $Q_{P'}$ is irreducible and contains $P'$, hence
we obtain a dominant rational map $\phi : \PP^1_K \times \PP^1_K \ra S$. For a general $ x \in S$, the plane spanned by $x$ and $L$
intersects $S$ as $ L \cup Q$, and $Q$ intersects $L$ in two points: hence the degree of $\phi$ equals 2.

We shall however see in theorem \ref{diagonal} that for a diagonal cubic surface with a $K$-rational point the existence of a $K$- rational line implies the 
birationality of $S$ with $\PP^2_K$.

Case 3): $C_P$ consists of three lines.

Then, since $P$ is singular for $C_P$, at least two of these lines contain $P$. If there is one of the three lines which does not contain $P$,
then this line is $K$-rational, and we are done as in case 2). There remains only

Case 4):  $C_P$ consists of three lines passing through $P$, and none of them is $K$-rational. 

Then $P$ is classically called  an {\bf Eckardt point} and since $P$ is $K$-rational  the three  lines passing through $P$
form a Galois orbit.

We conclude that the number of such points is at most $9$, hence we have shown that 
 there is a dominant rational map $\psi: \PP^2_K \ra S$ of degree at most 6.

\qed

The following theorem was first stated by  Segre in \cite{Seg43} and \cite{Seg51}, and was then also proven by  
Colliot-Th\'elene, Kanevsky, Sansuc in \cite{CTKS}.

\begin{thm}{\bf (Segre, Colliot-Th\'elene, Kanevsky and Sansuc)}\label{diagonal}
Let $K$ be a field of characteristic $\neq 2,3$ and containing a primitive third root of unity $\omega$.
And let $S$ be a smooth diagonal cubic surface defined over $K$ and with equation
$$    S = \{ (x_1,x_2,x_3, x_4) | a_1 x_1^3+  a_2 x_2^3 +  a_3 x_3^3 + a_4 x_4^3= 0 \}.$$

Then $S$ is $K$-rational if and only if it has a $K$-rational point and 
there is a permutation $a,b,c,d$ of the four coefficients $a_1, \dots, a_4$ such that $ac/bd$ has a cubic root in $K$.
\end{thm}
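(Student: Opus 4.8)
The plan is to deduce the theorem from the Swinnerton-Dyer criterion quoted above: a smooth cubic surface over $K$ is $K$-rational exactly when it has a $K$-rational point and carries a $\Gal(\bar K,K)$-stable set of pairwise disjoint lines of cardinality $n\in\{2,3,6\}$. Since a $K$-rational surface automatically has a $K$-point, the whole content is to show that, for a diagonal $S$ with a $K$-point, such a stable skew set exists if and only if some $a_ia_j/(a_ha_k)$ is a cube in $K$.

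First I would make the Galois action on the $27$ lines explicit. By the Proposition the lines split into three packets, one for each partition $\{i,j\}\cup\{h,k\}$ of $\{1,2,3,4\}$, and within a packet a line is given by a pair of cube roots $\lambda_{ij}^3=-a_j/a_i$, $\lambda_{hk}^3=-a_k/a_h$. Fixing $A_i\in\bar K$ with $A_i^3=a_i$ and writing $\sigma(A_i)=\omega^{e_i(\sigma)}A_i$, Kummer theory gives a homomorphism $e=(e_1,\dots,e_4)\colon\Gal(\bar K,K)\to(\Z/3)^4$ with image $E$; because $\omega\in K$ and $-1=(-1)^3$ is a cube, $\sigma$ multiplies $\lambda_{ij}$ by $\omega^{(e_j-e_i)(\sigma)}$. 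Thus inside each packet the $3\times3$ array of lines is a $(\Z/3)^2$-torsor on which Galois acts by the translation vector $((e_j-e_i)(\sigma),(e_k-e_h)(\sigma))$, and Galois preserves each packet. A short computation from the explicit equations shows that two lines of one packet are disjoint iff their labels differ in both coordinates (a \emph{transversal}), while a line is defined over $K$ iff its translation vector is trivial, i.e. $E\subseteq\{v_i=v_j\}\cap\{v_h=v_k\}$. Finally I would record the dictionary for the cube condition: since $[-1]=0$ in $K^*/(K^*)^3$, the class $[a_ia_j/(a_ha_k)]$ vanishes iff $E$ lies in the hyperplane $W_{ij|hk}=\{v:v_i+v_j=v_h+v_k\}$, and there are exactly three such hyperplanes, one per partition.

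With this dictionary the ``if'' direction is immediate: if, say, $E\subseteq W_{13|24}$, then in the packet of $\{1,2\}\cup\{3,4\}$ the translation vectors satisfy $e_4-e_3=-(e_2-e_1)$, so they lie on the slope $-1$ line of $(\Z/3)^2$; the three ``anti-diagonal'' transversals are then each Galois-stable, giving a stable set of $3$ pairwise disjoint lines, so Swinnerton-Dyer (together with the assumed $K$-point) yields $K$-rationality.

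The ``only if'' direction is the step I expect to be the main obstacle, since one must preclude \emph{every} stable skew configuration, including sixers and stray skew pairs crossing packets, when no cube relation holds. I would resolve it by exploiting that Galois acts through the $3$-group $E$: every orbit on the $27$ lines has $3$-power size, so a nonempty stable skew set must contain an orbit of size $1$ or $3$ (an orbit of size $9$ is a whole packet, which is not skew). An orbit of size $1$ is a $K$-line, forcing $E\subseteq\{v_i=v_j\}\cap\{v_h=v_k\}\subseteq W_{ih|jk}$; an orbit of size $3$ lies in a single packet and, being a subset of a skew set, is a transversal, which forces its translation line to have slope $\pm1$, i.e. $E\subseteq W$ for one of the three partitions. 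In either case a cube relation holds. Combining the two directions with Swinnerton-Dyer's criterion completes the proof; I would also cross-check the outcome against the cohomological computation of $H^1(K,\Pic\bar S)$ of Colliot-Th\'el\`ene, Kanevsky and Sansuc, which must return the same three hyperplane conditions.
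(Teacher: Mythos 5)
Your proposal is correct and follows essentially the same route as the paper: both reduce the statement to the Segre--Swinnerton-Dyer criterion and then analyze the Galois action on the three packets of nine lines of the diagonal cubic, matching Galois-stable skew configurations against cube conditions on the coefficients. Your Kummer-character/translation bookkeeping on each packet viewed as a $(\mathbb{Z}/3)^2$-torsor is a cleaner formalization of the paper's field-degree case analysis (and your orbit-size argument in the ``only if'' direction spells out cases, such as both diagonal slopes and unions of orbits, that the paper treats only implicitly), but the underlying argument is the same.
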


\Proof

Without loss of generality we may assume that $a_1=1$.

Recall that for a diagonal cubic surface $S$ the 27 lines are grouped into 3 subsets $\sR_i$ of 9 elements,
corresponding to indices $i=2,3,4$.

In fact, write $\{1,2,3,4\} = \{1,i\} \cup \{j,k\}$: then any line has the form
$$ x_1 + \be_i x_i = 0, x_j + \be_{j,k} x_k = 0, $$ 
where $\be_i ^3 = a_i$, $ \be_{j,k} ^3 = a_k / a_j.$

This shows that $S$  has a $K$-rational line  if and only if 
there is a permutation $a,b,c,d$ of the four coefficients $a_1, \dots, a_4$ such that $a/b$ and $c/d$ have a cubic root in $K$.
 
Consider now the field extension $K'$ generated by the cubic roots of the coefficients $a_2, a_3, a_4$
(recall that we are assuming   that $a_1=1$).

Then  the Galois group $G: = Gal (K', K)$ is $(\ZZ/3)^m, m=1,2,3$. Each set $\sR_i$ is a union of Galois orbits,
and it is a single Galois orbit if the field extension $K_i$ generated by $ a_i, a_k / a_j$ has degree 9.

We already observed that $K_i = K$ for some $i$ if and only if there  is a $K$-rational line.

In this case two lines
$$ x_1 + \be_i x_i = 0, x_j + \be_{j,k} x_k = 0, $$ 
$$ x_1 + \be'_i x_i = 0, x_j + \be'_{j,k} x_k = 0, $$
are $K$-rational and skew if $  \be_i \neq  \be_i', \be_{j,k} \neq \be_{j,k} '$.

In this way we have found three pairwise disjoint and $K$-rational lines, and $S$ is rational
by the criterion of Segre and
 Swinnerton-Dyer.

Now, if there is no $K$-rational line, then all the Galois orbits inside $\sR_i$ have either cardinality 9, or
are three orbits of cardinality 3. But in this case the field extension $K_i$ generated by $ a_i, a_k / a_j$ has degree 3.
If $a_i \in K$ or $a_k / a_j \in K$ then the orbits consist of incident lines, and one cannot apply the criterion
 of Segre and Swinnerton-Dyer. The only possibility is that both $a_i \notin K, a_k / a_j \notin K$
 but $a_k/a_j a_i \in K$.
 
 Then we get a Galois orbit of 3 pairwise disjoint lines, and we need to have a $K$-rational point in order to apply
 the criterion of Segre and
 Swinnerton-Dyer.

\qed

We can now show that our surface $S$ is not $K$-rational.

\begin{thm}
Let $K := k (s_3, s_4)$ and let $S$ be the diagonal cubic surface of equation
$$(s_3-s_4)s_3s_4(x_1^3-x_4^3) -(s_3-1)s_3(x_2^3-x_4^3) + 
(s_4-1)s_4(x_3^3-x_4^3) = 0. $$
Then $S$ is not $K$-rational.
\end{thm}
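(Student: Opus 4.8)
The plan is to apply the rationality criterion of Theorem \ref{diagonal}. First I would expand the defining equation and read off that $S$ is a diagonal cubic $\sum_{i=1}^4 a_i x_i^3 = 0$ with
$$a_1 = (s_3-s_4)s_3s_4,\quad a_2 = -(s_3-1)s_3,\quad a_3 = (s_4-1)s_4,$$
and $a_4 = -(a_1+a_2+a_3)$ the coefficient of $x_4^3$. The only computation of substance is to factor $a_4$; regrouping $a_4 = -s_3^2 s_4 + s_3 s_4^2 + s_3^2 - s_3 - s_4^2 + s_4$ yields the clean factorizations
$$a_1 = s_3 s_4(s_3-s_4),\ a_2 = -s_3(s_3-1),\ a_3 = s_4(s_4-1),\ a_4 = -(s_3-1)(s_4-1)(s_3-s_4).$$
Since $S$ contains $K$-rational points — for instance $(1,1,1,1)$, as all coordinates being cube roots of unity kill $\sum_{i=1}^3 a_i(x_i^3 - x_4^3)$ — Theorem \ref{diagonal} reduces the non-rationality of $S$ to showing that for no permutation $a,b,c,d$ of the $a_i$ does $ac/bd$ admit a cube root in $K$.

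Next I would note that $ac/bd$ depends only on the partition of $\{1,2,3,4\}$ into the two unordered pairs $\{a,c\}$ and $\{b,d\}$, and only up to inversion (swapping the two pairs inverts the ratio), while admitting a cube root in $K$ is invariant under inversion. Hence there are only three quantities to test, one for each of the three partitions. Substituting the factorizations above and cancelling the common prime factors, the three ratios collapse to
$$\frac{a_1a_2}{a_3a_4} = \left(\frac{s_3}{s_4-1}\right)^{2},\quad \frac{a_1a_3}{a_2a_4} = \left(\frac{s_4}{s_3-1}\right)^{2},\quad \frac{a_1a_4}{a_2a_3} = (s_3-s_4)^{2}.$$
I expect this cancellation to be the one genuinely pleasant surprise in the argument; everything downstream is formal, and the main obstacle is merely the bookkeeping of the expansion and factorization of the $a_i$.

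Finally I would invoke unique factorization in the polynomial ring $k[s_3,s_4]$: since $\omega\in k$, an element of $K^{*}$ is a cube if and only if its $k^{*}$-leading constant is a cube and every irreducible factor occurs with multiplicity divisible by $3$. The elements $s_3,\ s_4,\ s_3-1,\ s_4-1,\ s_3-s_4$ are pairwise non-associate primes of $k[s_3,s_4]$, and in each of the three ratios they appear to exponents $\pm 2 \equiv \mp 1 \pmod{3}$. Therefore none of the three ratios is a cube in $K$, so condition (2) of Theorem \ref{diagonal} fails; as $S$ does carry a $K$-rational point, we conclude that $S$ is not $K$-rational.
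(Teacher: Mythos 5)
Your proposal is correct and takes essentially the same approach as the paper: both apply the criterion of Theorem \ref{diagonal}, note that $(1,1,1,1)$ is a $K$-rational point, and use unique factorization in $k[s_3,s_4]$ to show that no ratio $ac/bd$ of the four coefficients is a cube in $K$. Your computation is in fact more careful than the paper's one-line justification (which asserts that $fa/bc$ has coprime numerator and denominator with non-cube denominator --- not literally accurate, since after cancellation e.g.\ $fa_1/(a_2a_3)=-(s_3-s_4)^2$ has trivial denominator): by factoring the fourth coefficient as $-(s_3-1)(s_4-1)(s_3-s_4)$ and reducing the three partition ratios to $\left(s_3/(s_4-1)\right)^2$, $\left(s_4/(s_3-1)\right)^2$, $(s_3-s_4)^2$, you make the failure of the cube condition immediate.
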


\Proof
The four coefficients are $(s_3-s_4)s_3s_4, -(s_3-1)s_3, (s_4-1)s_4$ and their sum $f= (s_3-s_4)s_3s_4 -(s_3-1)s_3 + (s_4-1)s_4$.

Take  a permutation $a,b,c$ of the first three coefficients and consider $ f a/ bc$.
This  is a fraction with relatively prime numerator and denominator, and such that the denominator is not a cube;
hence  $ f a/ bc$ is not a cube in $K$, and by the previous theorem \ref{diagonal} $S$ is not $K$-rational.  

\qed

\section{Special geometry  of $X_{4,6}$}

Let us analyse the equation of the hypersurface $H$.

It is a diagonal cubic surface in the variables $w_i$, with equation 
$$a (s_3,s_4) (w_2^3-1) + b (s_3,s_4) (w_4^3-1) + 
c (s_3,s_4) (w_3^3-1) = 0\,\, $$
where the coefficients $a,b,c$ are cubic polynomials
$$a:= (s_3-s_4)s_3s_4, \  b : =  - (s_3-1)s_3 , \ c : =   (s_4-1)s_4\,\, $$

The coefficients $a,b,c$ yield a rational map $\phi$ of the plane $\PP^2$  into $\PP^2$, given by the system of cubics which,
in homogeneous coordinates $(u_2, u_3, u_4)$ (here $s_3 =   u_3/u_2, s_4 =   u_4/u_2$)
reads out as
$$ \phi (u_2, u_3, u_4) = [(u_3-u_4)u_3u_4 :   (u_2-u_3)u_2 u_3 :   (u_4-u_2)u_4 u_2].$$
 
The cubic polynomials $a,b,c$ are products of linear forms and the indeterminacy locus of $\phi$ is the
set of $7$ points 
$$\{(1,1,1), (0,1,1), (1,0,1), (1,1,0), (1,0,0), (0,1,0), (0,0,1).\}$$
Hence $\phi$ is a double covering, and induces a birational involution on the source $\PP^2$,
which is classically called the {\bf Geiser involution}.

These seven base points are simple base points and blowing up the plane $\PP^2$ at them one obtains a Del Pezzo
surface of degree $2$, $F$, which is a double cover of the plane branched on a curve $\sB$ of degree $4$.

In order to compute effectively $\sB$, we calculate the ramification divisor of $\phi$, which is given by the
determinant of the Jacobian matrix of $\phi$.

Thus the equation $R(u_2, u_3, u_4)$ of the ramification divisor is the determinant of the following matrix
$$
\begin{pmatrix}
0 & 2 s_3 s_4 - s_4^2& s_3^2- 2 s_3 s_4\\
s_4^2 -  2 s_2 s_4&    0 & 2 s_2 s_4 - s_2^2 \\
2 s_2 s_3 - s_3^2 & s_2^2 - 2 s_2 s_3 & 0\\
\end{pmatrix}
$$

An elementary calculation yields
$$R(u_2, u_3, u_4) = 6  u_2 u_3 u_4 (u_2 - u_3)  (u_3 - u_4)  (u_4 - u_2) . $$

Thus the ramification divisor consists of six lines, which are contracted to the $6$ points 
$$\{ (0,1,-1), (1,0,1), (1,0,-1), (1,0,0), (0,1,0), (0,0,1).\}$$

The six lines of the ramification divisor intersect in the four points 
 $$ \{e, e_1, e_2, e_3 \} = \{ (1,1,1), (1,0,0), (0,1,0), (0,0,1)\},$$
 through each of which three of the lines meet, and in the further three points
 $$ \{ P_1, P_2, P_3\} = \{ (0,1,1), (1,0,1), (1,1,0)\}, $$
 where only two lines of the sixtuple pass.

In the Del Pezzo surface $F$ the strict transforms of the six lines are $(-2)$ (smooth) rational curves,
the points  $e, e_1, e_2, e_3$ yield $(-1)$ (smooth)  rational curves, which are seen to be
part of the ramification divisor of the degree $2$ morphism
$$ \varphi : F \ra \PP^2$$
since the strict transform of $R$ contains these curves with multiplicity $3$.
Since the strict transform of $R$ contains the $(-1)$ (smooth)  rational curves, blow up 
of the points $P_i$,  with multiplicity $2$, these curves are not in the ramification divisor
of $\varphi$.

The conclusion is that  the branch locus is the image of the $4$ curves blow up of  $e, e_1, e_2, e_3$,
i.e.,
$$ \sB = \{ (a,b,c) | a b c (a+b+c) = 0 \}. $$ 

Hence the Del Pezzo surface $F$,  the double cover of $\PP^2$ branched on $\sB$,
is contained in the line bundle $\LL$ over $\PP^2$ whose sheaf of sections is $\hol_{\PP^2}(2),$
and is defined there by the equation
$$ s^2 =  a b c (a+b+c). $$

\section{Some  remarks in the case $n=5$ }

Our computation for $n=4$ is quite symmetric with respect to the variables with  index $\ge 3$. 
So  exactly the same computation yields:

\begin{lemma}\label{lem1}
$$k(X_5) = k(s_3, s_4, s_5, w_2, w_3, 
w_4, w_5)$$
with relations precisely 
$$(s_3-s_4)s_3s_4(w_2^3-1) = (s_3-1)s_3(w_4^3-1) - (s_4-1)s_4(w_3^3-1)\,\, ,$$
$$(s_3-s_5)s_3s_5(w_2^3-1) = (s_3-1)s_3(w_5^3-1) - (s_5-1)s_5(w_3^3-1)\,\, .$$
In other words, $X_5$ is birational to the above complete intersection  in ${\AAA}^7$ with affine coordinate 
$$(s_3, s_4, s_5, w_2, w_3, w_4, w_5)\,\, $$
defined by the above equations of bidegree $(3,3)$.
\end{lemma}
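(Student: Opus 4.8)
The plan is to rerun, essentially verbatim, the chain of birational substitutions that produced the lemmas of Section 2, now carrying along a fifth copy of each variable. Starting from
$$k(X_5) = k(x_1,\dots,x_5,y_1,\dots,y_5)\,\, ,\,\, y_i^6 = x_i^2(x_i-1)\,\, ,$$
I would introduce in turn $t_i := y_i/y_1$, then $u_i := (x_1/x_i)t_i^3$, then $v_i := 1/u_i$ and $w_i := t_i/u_i$, and finally $s_i := (v_i-1)/(v_2-1)$, exactly as in Section 2, but now for every index $2 \le i \le 5$ (respectively $3 \le i \le 5$ for the $s_i$). The decisive observation is that each step of Section 2 treats the indices $i \ge 2$ in parallel and is insensitive to their number: every substitution is an invertible rational change of coordinates, so $k(X_5)$ is preserved throughout and the relations transform as recorded there.

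First I would settle the base case. As in the first lemma of Section 2, the degree count $[k(V):k(x_1,\dots,x_5)] = 6^5$ together with $[k(V):k(X_5)] = 6$ forces $[k(X_5):k(x_1,\dots,x_5)] = 6^4$, so the four relations $t_i^6 = x_i^2(x_i-1)/(x_1^2(x_1-1))$, $i=2,3,4,5$, are successive minimal equations and hence are precisely the relations. Passing to the $u_i$ and then to the pairs $(v_i,w_i)$ reproduces, for each $i \in \{3,4,5\}$, the identity
$$\frac{1-w_i^3}{1-v_i^2} = \frac{1-w_2^3}{1-v_2^2}\,\, ,$$
i.e.\ the three relations $(v_i^2-1)(w_2^3-1) = (v_2^2-1)(w_i^3-1)$ for $i = 3,4,5$. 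All the nonvanishing conditions invoked along the way ($u_i \neq 0$, $u_i - t_i^3 \neq 0$, $v_i - 1 \neq 0$, $w_2^3 - 1 \neq 0$, and so on) hold for $i=5$ by the same argument as for $i=3,4$, since the construction is symmetric in the indices $\ge 3$.

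The only step that genuinely changes is the final elimination of $v_2$. After introducing the $s_i$, each of the three relations above becomes linear in $v_2$ and yields
$$\frac{v_2-1}{2} = \frac{-s_i(w_2^3-1)+(w_i^3-1)}{s_i^2(w_2^3-1)-(w_i^3-1)}\,\, ,\,\, i = 3,4,5\,\, .$$
Equating the expressions for $i=3$ and $i=4$ reproduces verbatim the last computation of Section 2 and gives the first displayed relation, while equating $i=3$ with $i=5$ gives the second, with the index $4$ replaced by $5$ throughout. Since there are three quantities all equal to $(v_2-1)/2$, any two of the three pairwise equalities are independent and imply the third, so exactly two relations survive, as claimed.

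The main obstacle---indeed the only point needing care---is the word \emph{precisely}: that these two cubics generate all the relations and cut out a subvariety of the correct dimension. This follows because every substitution above is a birational isomorphism, so $k(X_5)$ is carried along unchanged and the two surviving equations are the exact transforms of the complete relation set; the transcendence degree count $7 - 2 = 5 = \dim X_5$ then confirms that the two equations form a complete intersection birational to $X_5$. Finally, bihomogenizing separately in the $s$-variables (via $s_i = u_i/u_2$, clearing the factor $u_2^3$) and in the $w$-variables exhibits each relation as bihomogeneous of bidegree $(3,3)$, as asserted.
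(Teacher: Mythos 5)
Your proposal is correct and follows exactly the route the paper intends: the paper gives no separate proof for $n=5$ beyond remarking that the Section~2 computation is symmetric in the indices $\ge 3$ and ``exactly the same computation yields'' the lemma, which is precisely the rerun of substitutions $t_i, u_i, (v_i,w_i), s_i$ you carry out. Your added care about the degree count $6^4$, the pairwise equalities of the three expressions for $(v_2-1)/2$ (two of which imply the third), and the bidegree $(3,3)$ homogenization faithfully reproduces and makes explicit the argument the paper relies on.
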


{\bf Aknowledgement.} The first two authors  would like to  thank  Fabio Perroni for his invitation to the conference SYMKADYN held at the SISSA Trieste in September 2014: there the initial idea of this note grew up; and also KIAS Seoul, where most part of the paper was written while the first author
was visiting as KIAS scholar.

\end{document}